\documentclass[10pt]{article}
\usepackage{amsmath, amsthm, amssymb, amsfonts, mathrsfs, mathtools}
\usepackage{graphicx}
\usepackage{makeidx}
\usepackage[left=2cm,right=2cm,top=2cm,bottom=2cm]{geometry}
\usepackage{caption}
\usepackage{subcaption}
\usepackage{tikz}
\usetikzlibrary{decorations.pathreplacing}
\tikzstyle{vertex}=[auto=left,circle,draw=black,fill=white, inner sep=1.5]

\usepackage{hyperref}
\hypersetup{colorlinks=true, citecolor={blue}, linkcolor=blue, filecolor=magenta, urlcolor=cyan}

\newtheorem{theorem}{Theorem}[section]

\newtheorem{lemm}{Lemma}[section]
\newtheorem{definition}{Definition}

\textheight=22cm \textwidth=16cm \oddsidemargin=0.2in
\evensidemargin=0.2in \topmargin=-0.25in

\title{Maximum Frustration in Signed Generalized Petersen Graphs}

\author{ Deepak Sehrawat\\
Department of Mathematics\\
Indian Institute of Technology Guwahati\\
Guwahati, India - 781039\\
Email: deepakmath55555@iitg.ac.in\\
\\
Bikash Bhattacharjya\\
Department of Mathematics\\
Indian Institute of Technology Guwahati\\
Guwahati, India - 781039\\
Email: b.bikash@iitg.ac.in
}
\date{}

\begin{document}
\maketitle

\vspace{-0.3in}

\vspace*{0.3cm}
\noindent
\textbf{Abstract.} A \textit{signed graph} is a simple graph whose edges are labelled with positive or negative signs. A cycle is \textit{positive} if the product of its edge signs is positive. A signed graph is \textit{balanced} if every cycle in the graph is positive. The \textit{frustration index} of a signed graph is the minimum number of edges whose deletion makes the graph balanced. The \textit{maximum frustration} of a graph is the maximum frustration index over all sign labellings. In this paper, first, we prove that the maximum frustration of generalized Petersen graphs $P_{n,k}$ is bounded above by $\left\lfloor \frac{n}{2} \right\rfloor + 1$ for $\gcd(n,k)=1$, and this bound is achieved for $k=1,2,3$. Second, we prove that the maximum frustration of $P_{n,k}$ is bounded above by $d\left\lfloor \frac{n}{2d} \right\rfloor + d + 1$, where $\gcd(n,k)=d\geq2$.

\vspace*{0.2cm}
\noindent
\textbf{Keywords:} balance, switching, frustration index, frustration number, maximum frustration, generalized Petersen graph.

\section{Introduction}

In~\cite{Harary2}, Harary introduced the notion of signed graphs and balance. A cycle in a signed graph is called \textit{positive} if the product of the signs of its edges is positive. A signed graph is \textit{balanced} if all its cycles are positive. Harary also gave a necessary and sufficient condition for a signed graph to be balanced. Two years after Harary's paper, Catwright and Harary \cite{Harary} used signed graphs and balance to model social stress in groups of people in social psychology.

A signed graph is \textit{unbalanced} if it is not balanced, \textit{i.e.,} it has at least one cycle which is not positive. An unbalanced signed graph can be made balanced by deleting edges. The smallest number of edges whose deletion leaves the graph balanced is called the \textit{frustration index}. This number is implicated in certain questions of social psychology~\cite{Abelson} and spin-glass physics~\cite{Toulouse}.

Finding frustration index is an NP-complete problem~\cite{Barahona2}. In~\cite{T.Zaslavsky}, the author determined the frustration indices of all signed Petersen graphs. In~\cite{Sivaraman}, the author determined the frustration indices of all signed Heawood graphs. And in~\cite{Bowlin}, Bowlin gave the upper bound for the maximum frustration of complete bipartite graph. Many other classes of graphs are treated in \cite{Sole}.

In \cite{Sivaraman}, Sivaraman proved that the frustration index of any signed graph $ \Sigma$, whose underlying graph $G$ is cubic, simple and triangle-free, is bounded above by $\frac{3}{8}|V(G)|$, where $|V(G)|$ denotes the number of vertices of $G$. In this paper, we find an upper bound for the maximum frustration of generalized Petersen graphs.

This paper is organised as follows. In the next section we describe some basic definitions and existing results about the frustration index. In Section~\ref{Bound_gcd(n,k) = 1}, we prove that the maximum frustration of generalised Petersen graphs $P_{n,k}$, where $\gcd(n,k)= 1$, is bounded above by $\lfloor \frac{n}{2} \rfloor + 1$, and that this bound is tight when $k=1,2, ~\text{and}~ 3$. In Section~\ref{Bound_gcd(n,k) = d}, we show that the maximum frustration of $P_{n,k}$, where $\gcd(n,k)= d \geq2$, is bounded above by $d\left\lfloor \frac{n}{2d} \right\rfloor + d + 1$. Finally, we list some problems in the last section for future work.

Throughout this paper we consider only simple and finite graphs. For the graph theoretic terms that are used but not defined in this paper, see~\cite{Bondy}. For detailed study of signed graphs, we refer the reader to \cite{Zaslavsky}. In all the figures of this paper, solid lines represent positive edges and dotted lines represent negative edges.

\section{Preliminaries}

Let $G = (V(G), E(G))$ be a simple graph. Further, by $|V(G)|$ and $|E(G)|$ we denote the cardinality of the vertex set and the edge set of $G$, respectively. A \textit{signed graph} is a graph whose edges are labelled with positive or negative signs. We write $\Sigma = (G, E_{-})$ to denote a signed graph, where $G$ is called the \textit{underlying graph} of $\Sigma$ and $E_{-}$ denotes the set of negative edges. The set $E_{-}$ is called the \textit{signature}. The number of edges in a signature $E_{-}$ is called the size of the signature and denoted by $|E_{-}|$. If the edges of $ \Sigma$ are all positive, that is $E_{-}=\emptyset$, then the signed graph is called the \textit{all positive} signed graph.

A \textit{switching} of a vertex of a signed graph is to change the sign of each edge incident to the vertex. If we switch each vertex of a subset $X$ of $V(G)$, then we write the resulting signed graph as $\Sigma^{X}$. We say $\Sigma_{1}$ is \textit{switching equivalent} or simply \textit{equivalent} to $\Sigma_{2}$, denoted $\Sigma_{1} \sim \Sigma_{2}$, if both $\Sigma_{1} ~ \text{and} ~ \Sigma_{2}$ have same underlying graph $G$ and $\Sigma_{1} = \Sigma_{2}^{X}$ for some $X \subseteq V(G)$. Let $E_{-}^{1} ~ \text{and} ~ E_{-}^{2}$ be the signatures of $\Sigma_{1} ~ \text{and} ~ \Sigma_{2}$, respectively, then we also say that $E_{-}^{1}$ is switching equivalent to $E_{-}^{2}$, denoted $E_{-}^{1} \sim E_{-}^{2}$, to mean that $\Sigma_{1}$ is switching equivalent to $\Sigma_{2}$. Note that switching defines an equivalence relation on the set of all signed graphs over $G$ (also on the set of signatures). Each equivalence class of this equivalence relation is denoted by $[G,E_{-}]$, where $(G,E_{-})$ is any member of the class. In short, we write $[E_{-}]$ to denote the class of all signatures equivalent to the signature $E_{-}$. 

The \textit{sign} of a cycle in a signed graph is the product of its edge signs. A signed graph is \textit{balanced} if each of its cycles is positive. The following theorem states that the set of negative cycles uniquely determines the equivalence class to which a signed graph belongs.

\begin{theorem}~\cite{Zaslavsky}
\label{Signature}
Two signed graphs $\Sigma_{1}$ and $\Sigma_{2}$ are switching equivalent if and only if they have the same set of negative cycles.
\end{theorem}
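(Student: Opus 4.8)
The plan is to prove both implications, treating the forward direction as routine and concentrating the effort on the converse. The underlying object throughout is the \emph{difference signature}, and the crux will be an appeal to (or a spanning-tree proof of) Harary's balance theorem.

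First I would dispose of the ``only if'' direction by showing that switching preserves the sign of every cycle. Fix a vertex $v$ and a cycle $C$. If $v \notin V(C)$, then no edge of $C$ is incident to $v$, so switching $v$ leaves the sign of $C$ untouched. If $v \in V(C)$, then exactly two edges of $C$ meet $v$; switching $v$ flips the signs of both, so their product, and hence the sign of $C$, is unchanged. Iterating over the vertices of any switching set $X$ shows that $\Sigma$ and $\Sigma^{X}$ assign the same sign to every cycle, so switching-equivalent signed graphs have the same set of negative cycles.

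For the converse, suppose $\Sigma_{1}$ and $\Sigma_{2}$ have the same underlying graph $G$ and the same set of negative cycles. The key device is the difference signature $E_{-}^{\Delta} = E_{-}^{1} \triangle E_{-}^{2}$, where an edge is negative in $\Sigma_{\Delta} = (G, E_{-}^{\Delta})$ precisely when $\Sigma_{1}$ and $\Sigma_{2}$ disagree on it. For any cycle $C$, its sign in $\Sigma_{\Delta}$ equals the product of its signs in $\Sigma_{1}$ and $\Sigma_{2}$; since these coincide by hypothesis, every cycle of $\Sigma_{\Delta}$ is positive, that is, $\Sigma_{\Delta}$ is balanced. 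It then remains to extract a switching set from this balance. Assuming $G$ connected (otherwise I argue componentwise), I would fix a spanning tree $T$ with root $r$ and define $\eta(v) \in \{+,-\}$ as the product of the $\Sigma_{\Delta}$-signs along the unique $T$-path from $r$ to $v$. Taking $X = \{v : \eta(v) = -\}$, a telescoping computation shows that switching $X$ makes every tree edge positive; for a non-tree edge $uv$, the new sign equals the sign of the fundamental cycle it determines, which is positive by balance. Hence $\Sigma_{\Delta}^{X}$ is all positive, which unwinds to $\Sigma_{1} = \Sigma_{2}^{X}$, giving switching equivalence.

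The main obstacle is this final step, equivalently Harary's balance theorem: one must promote ``every fundamental cycle is positive'' to ``every cycle is positive.'' This is exactly what validates the globally defined potential $\eta$, and it rests on the fact that, under symmetric difference, the sign of a cycle is multiplicative, so the fundamental cycles (which generate the cycle space) control the signs of all cycles. Pinning down this consistency is the heart of the argument; the remaining bookkeeping with $\eta$ and the telescoping identity is routine.
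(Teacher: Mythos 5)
Your proof is correct, but note that the paper itself offers no proof of this statement at all: it is quoted as a known result from Zaslavsky's \emph{Signed graphs} paper, so there is nothing internal to compare against. Your argument is the standard one from the literature, and both halves are sound: the forward direction via the two-edges-per-cycle-vertex observation, and the converse via the difference signature $E_{-}^{1} \,\triangle\, E_{-}^{2}$, whose cycle signs are the products $\sigma_{1}(C)\sigma_{2}(C)$, followed by the spanning-tree potential $\eta$ and the telescoping computation. One remark on your closing paragraph: the ``main obstacle'' you identify is not actually an obstacle in your own proof. You never need to promote ``every fundamental cycle is positive'' to ``every cycle is positive,'' because the hypothesis already gives you positivity of \emph{all} cycles of $\Sigma_{\Delta}$; what your spanning-tree argument needs is only the opposite (and easier) specialization, namely that the particular fundamental cycle of each non-tree edge is positive. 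The promotion you describe (fundamental cycles generate the cycle space, and signs are multiplicative under symmetric difference) is what one needs to prove Harary's balance criterion from a finite check, but your proof of the present theorem is complete without it. Apart from that mislabeled difficulty, the argument stands as a valid, self-contained proof of a statement the paper leaves to the cited reference.
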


Now we define the frustration index and the frustration number of a signed graph. These parameters are invariants under switching because they depend only on cycle signs, which are not changed by switching.

\begin{definition}\label{FI}
\rm{The \textit{frustration index} of a signed graph $(G, E_{-})$, denoted $l(G, E_{-})$, is the smallest number of edges whose deletion leaves a balanced signed graph.}
\end{definition}
 
Implicit in \cite{Barahona}, frustration index is switching invariant. But in the following lemma, we give a simple and different proof of this fact.

\begin{lemm}\label{invariant FI}
The frustration index of a signed graph is invariant under switching.
\end{lemm}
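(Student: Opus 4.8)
The plan is to exploit the fact, already captured by Theorem~\ref{Signature}, that switching preserves the sign of every cycle and hence preserves balance: a signed graph is balanced precisely when its switching is balanced. (Directly, switching a vertex $v$ flips the signs of the two cycle-edges at $v$ for each cycle through $v$ and none otherwise, so every cycle sign is multiplied by $(-1)^2=1$.) Starting from this, I would show that the operations of \emph{deleting a set of edges} and \emph{switching a set of vertices} commute, and then transport an optimal balancing deletion from a signed graph to its switching, obtaining the two frustration indices as mutual upper bounds.

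First I would record the key commutation: for any $X \subseteq V(G)$ and any $D \subseteq E(G)$, the signed graphs $\Sigma^{X} - D$ and $(\Sigma - D)^{X}$ coincide, where in the latter the switching is performed on the surviving vertices of $G - D$. This is immediate because switching a vertex only alters the signs of the edges still incident to it, and deleting $D$ removes exactly the same edges regardless of order; the signs of all surviving edges are therefore determined identically by the two constructions.

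Next, let $\Sigma = (G, E_{-})$ be given and let $\Sigma^{X}$ be any switching of it. Choose an edge set $D$ with $|D| = l(G, E_{-})$ for which $\Sigma - D$ is balanced. Applying the commutation above, $\Sigma^{X} - D = (\Sigma - D)^{X}$; since switching preserves cycle signs, $(\Sigma - D)^{X}$ is balanced, so deleting the same $D$ from $\Sigma^{X}$ also yields a balanced signed graph. Hence $l(\Sigma^{X}) \le |D| = l(G, E_{-})$. Because switching a vertex twice restores the original signs, we have $\Sigma = (\Sigma^{X})^{X}$, so the identical argument with the roles of $\Sigma$ and $\Sigma^{X}$ interchanged gives $l(G, E_{-}) \le l(\Sigma^{X})$. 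Combining the two inequalities yields equality of the frustration indices.

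I expect no serious obstacle, as the lemma is essentially a bookkeeping statement once balance is known to be switching-invariant. The one point deserving care is the commutation step: being explicit that the vertex subset used to switch the subgraph $G - D$ is $X$ restricted to the surviving vertices, and that no edge of $D$ can affect the sign of any cycle of $\Sigma - D$. Once that is spelled out, the equality $l(G,E_{-}) = l(\Sigma^{X})$ follows formally from the switching-invariance of balance.
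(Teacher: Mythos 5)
Your proposal is correct and follows essentially the same route as the paper: both transport a minimum balancing deletion set $D$ across the switching using the fact that switching preserves cycle signs, so that $\Sigma - D$ balanced implies $\Sigma^{X} - D$ balanced, and then conclude equality of the two minima. Your version merely makes explicit two points the paper leaves implicit — the commutation $\Sigma^{X} - D = (\Sigma - D)^{X}$ and the symmetry argument via $(\Sigma^{X})^{X} = \Sigma$ yielding both inequalities.
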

\begin{proof}
Let $(G, E_{-})$ be a signed graph. Suppose $(G, E_{-}) - D$ is a balanced signed graph, where $D \subseteq E(G)$. Since switching does not change the sign of a cycle, if we switch to $(G, E_{-})^{X}$ and then delete $D$, there will still not be any negative cycles. Therefore $(G, E_{-})^{X} - D$ is balanced. Thus deletion of an edge set that makes one of the graphs in $[G, E_{-}]$ balanced, also make the other graphs of $[G, E_{-}]$ balanced. This completes the proof.
\end{proof}

\noindent
If $(G, E_{-})$ is balanced then clearly $l(G, E_{-}) = 0$. Also it is easy to see that 
\begin{equation}\label{min ineq}
l(G, E_{-}) = \underset{E_{-}' \in [E_{-}]}{\min} |E_{-}'|.
\end{equation}

The \textit{maximum frustration} $D(G)$ of a graph $G$ is the maximum frustration index over all possible signatures. That is, \begin{equation}\label{max ineq}
D(G) = \underset{E_{-} \subseteq E(G)}{\max} l(G,E_{-}).
\end{equation}

\begin{definition}\label{FN}
\rm{The \textit{frustration number} of a signed graph $(G, E_{-})$, denoted $l_{0}(G, E_{-})$, is the smallest number of vertices whose deletion leaves a balanced signed graph.}
\end{definition}

\begin{lemm}\cite{T.Zaslavsky}\label{invariant FN}
Switching does not change the frustration number of a signed graph. Moreover, $l_{0}(G, E_{-}) \leq l(G, E_{-})$ for every signature $E_-$ of a graph $G$.
\end{lemm}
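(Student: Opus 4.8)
The plan is to handle the two assertions separately, mirroring the proof of Lemma~\ref{invariant FI}. For the switching invariance of the frustration number, I would fix a signed graph $(G, E_-)$ and a subset $S \subseteq V(G)$, and observe that deleting the vertices of $S$ leaves the induced signed subgraph on $V(G) \setminus S$, whose cycles are exactly those cycles of $G$ avoiding every vertex of $S$. The key remark is that switching by a set $X$ and then deleting $S$ produces (up to the switching restricted to the surviving vertices) the same signed graph as deleting $S$ first and then switching by $X \cap (V(G) \setminus S)$; explicitly, $(G, E_-)^X - S = \bigl((G, E_-) - S\bigr)^{X \cap (V(G) \setminus S)}$, because a switch at a vertex of $X \cap S$ only alters edges incident to $S$, all of which are deleted. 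Since switching never changes the sign of any cycle, $(G, E_-) - S$ is balanced if and only if $(G, E_-)^X - S$ is balanced. As this equivalence holds for every $S$, the minimum size of a vertex set achieving balance is the same for $(G, E_-)$ and $(G, E_-)^X$, which gives the invariance.

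For the inequality $l_0(G, E_-) \le l(G, E_-)$, I would start from an optimal edge set: let $D \subseteq E(G)$ with $|D| = l(G, E_-)$ be such that $(G, E_-) - D$ is balanced. From each edge of $D$ choose one endpoint, and let $S$ be the resulting set of chosen vertices, so that $|S| \le |D|$. Deleting $S$ removes every edge incident to a vertex of $S$, and since every edge of $D$ has an endpoint in $S$, each cycle of $(G, E_-) - S$ (that is, each cycle of $G$ avoiding $S$) uses no edge of $D$ and is therefore a cycle of $(G, E_-) - D$. Because $(G, E_-) - D$ is balanced, all such cycles are positive, so $(G, E_-) - S$ is balanced as well. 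Hence $l_0(G, E_-) \le |S| \le |D| = l(G, E_-)$.

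The routine bookkeeping — verifying that vertex deletion commutes with switching in the stated sense, and that an endpoint cover of $D$ kills every cycle that $D$ kills — is straightforward. The only point that needs a little care is making precise that the cycles surviving vertex deletion form a \emph{subset} of those surviving edge deletion, so that balance is inherited; this containment of cycle sets, rather than any containment of the graphs themselves, is what drives both halves of the argument. I do not anticipate a serious obstacle, since the whole statement reduces to the switching invariance of cycle signs together with the simple comparison between the number of edges in $D$ and the number of their endpoints.
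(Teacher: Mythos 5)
Your proof is correct, but note that the paper itself contains no argument for this lemma: it is stated with a citation to Zaslavsky's paper on signed Petersen graphs, so you are supplying a proof the paper omits. Your first half is the natural vertex-deletion analogue of the paper's own proof of Lemma~\ref{invariant FI}, and you handle the one point where that analogy needs care: switching at a vertex of $X \cap S$ affects only edges incident to $S$, all of which disappear upon deletion, which justifies the identity $(G, E_-)^X - S = \bigl((G, E_-) - S\bigr)^{X \cap (V(G) \setminus S)}$ and hence the invariance. Your second half, covering an optimal edge set $D$ by choosing one endpoint per edge to get $S$ with $|S| \le |D|$, and observing that every cycle avoiding $S$ avoids $D$ and is therefore positive, is the standard argument for $l_0(G, E_-) \le l(G, E_-)$ and is also sound. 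In short: a complete, self-contained proof of a statement the paper only cites, built by mirroring the paper's edge-deletion argument and adding the endpoint-cover comparison.
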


From here onwards, in view of Lemma~\ref{invariant FI} and Lemma~\ref{invariant FN}, we denote the signed graph by $[G, E_{-}]$ as we study only the frustration index and the frustration number of signed graphs in this paper. In~\cite{Sivaraman}, the author proved that the frustration index and the frustration number are same for signed cubic graphs.

\begin{theorem}\cite{Sivaraman} \label{FI = FN}
Let $[G, E_{-}]$ be a signed cubic graph. Then $l[G, E_{-}]  = l_{0}[G, E_{-}]$. 
\end{theorem}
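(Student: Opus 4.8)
The plan is to lean on Lemma~\ref{invariant FN}, which already gives $l_0[G,E_-]\le l[G,E_-]$, so the entire content is the reverse inequality $l[G,E_-]\le l_0[G,E_-]$. I would prove the slightly more general statement that $l[G,E_-]\le l_0[G,E_-]$ holds for every signed graph whose underlying graph has maximum degree at most $3$ (this wider class is forced on me because the graphs produced during the argument are no longer cubic), and I would argue by induction on $|V(G)|$.

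The engine of the proof is the following one-vertex observation, which is where the degree bound does all the work. Suppose $v$ is a vertex with $\deg(v)\le 3$ and the signed graph $[G,E_-]-v$ obtained by deleting $v$ is balanced. Recalling from~\eqref{min ineq} that the frustration index is the least number of negative edges over the switching class, I would first switch $[G,E_-]$ so that every edge of $G-v$ is positive; this is possible precisely because $G-v$ is balanced, and switching $v$ afterwards changes none of these edges since they are not incident to $v$. Now a cycle through $v$ uses exactly two of the $\le 3$ edges at $v$, and because the path it follows inside $G-v$ is all positive, its sign is just the product of the signs of those two edges. Among the $\le 3$ edges at $v$, a pigeonhole/majority argument (switching $v$ if necessary) lets me assume at most one of them is negative; deleting that single edge then forces every surviving cycle through $v$ to use two edges of the same sign, hence to be positive. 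Thus one edge deletion balances $G$, i.e.\ $l[G,E_-]\le 1$.

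For the inductive step I would take a minimum balancing vertex set $S$, so $|S|=l_0[G,E_-]$, and pick any $v\in S$. Then $S\setminus\{v\}$ balances $G-v$, so $l_0[G-v]\le |S|-1$, and since $G-v$ has maximum degree $\le 3$ and fewer vertices, the induction hypothesis yields an edge set $D'$ with $|D'|\le |S|-1$ such that $(G-v)-D'$ is balanced. Reinstating $v$, in the graph $[G,E_-]-D'$ the vertex $v$ still has degree $\le 3$ and $\big([G,E_-]-D'\big)-v=(G-v)-D'$ is balanced, so the one-vertex observation supplies one further edge $e$ (or none) with $[G,E_-]-D'-e$ balanced. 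Hence $l[G,E_-]\le |D'|+1\le |S|=l_0[G,E_-]$, completing the induction.

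The step I expect to be the genuine obstacle is not the counting but making sure that one edge per peeled vertex really suffices: a cycle may pass through several vertices of $S$, and a naive ``delete the minority edge at each vertex of $S$ simultaneously'' risks interference through edges with both endpoints in $S$. Peeling the vertices of $S$ off one at a time is exactly what isolates the clean degree-$3$ pigeonhole at each stage and sidesteps this interaction; the remaining care is purely in checking that switching the peeled vertex leaves the already-balanced remainder untouched, which holds because that switch only flips edges incident to $v$.
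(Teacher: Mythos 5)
The paper never proves this theorem itself---it is imported from Sivaraman's thesis \cite{Sivaraman} as a cited result---so there is no in-paper argument to compare against; judged on its own, your proof is correct and complete. The reduction to the single inequality $l[G,E_-]\le l_0[G,E_-]$ via Lemma~\ref{invariant FN} is right, and the two structural decisions you make are exactly the ones that let the induction close: (i) strengthening the statement from cubic graphs to graphs of maximum degree at most $3$, which is forced because deleting a vertex destroys $3$-regularity, and (ii) peeling the vertices of a minimum balancing set one at a time, which sidesteps the interference from edges joining two vertices of that set that would break a naive simultaneous ``delete the minority edge at each vertex'' argument. Your one-vertex step is sound: if $G-v$ is balanced, a switching supported on $V(G)\setminus\{v\}$ makes $G-v$ all positive, a further switch at $v$ (if needed) leaves at most one of its at most three incident edges negative, and deleting that edge yields an all-positive, hence balanced, graph, so $l\le 1$; applying this to $[G,E_-]-D'$, where $D'$ comes from the induction hypothesis on $G-v$, gives $l[G,E_-]\le |D'|+1\le l_0[G,E_-]$. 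This vertex-peeling induction is essentially the standard route to this fact, and it even buys a little more than the stated theorem: the equality $l=l_0$ for all subcubic signed graphs, not only cubic ones. The only cosmetic omission is the trivial base case (when the minimum balancing vertex set is empty, the graph is balanced and both parameters vanish), which your inductive setup handles implicitly.
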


Let $[G, E_{-}]$ be a signed graph. A signature $E_{-}' \in [E_{-}]$ is said to be \textit{minimum} if the number of edges in $E_{-}'$ is minimum among all signatures in $[E_{-}]$. Note that a minimum signature need not be unique. This fact can be verified by looking at a negative cycle. A signed graph with a minimum signature is said to be a \textit{reduced} signed graph. Further, the edges of a minimum signature of cubic graph has a special structure. More precisely, we have the following lemma.

\begin{lemm}\cite{T.Zaslavsky} \label{minimal}
The edges of any minimum signature of a cubic graph form a matching. 
\end{lemm}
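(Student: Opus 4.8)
The plan is to argue from minimality via a local vertex-switching count, using the fact that every vertex of a cubic graph has degree exactly $3$. Let $E_{-}'$ be a minimum signature of the given cubic graph $G$, and for a vertex $v$ let $d^{-}(v)$ denote the number of edges incident to $v$ that are negative under $E_{-}'$; since $G$ is cubic, $d^{-}(v) \in \{0,1,2,3\}$. The central observation I would establish first is the effect of a single switch on the size of the signature: switching at $v$ turns its $d^{-}(v)$ negative incident edges positive and its $3 - d^{-}(v)$ positive incident edges negative, while leaving all other edges unchanged. Hence the number of negative edges changes by exactly
\begin{equation}\label{switchcount}
\bigl(3 - d^{-}(v)\bigr) - d^{-}(v) = 3 - 2\,d^{-}(v).
\end{equation}

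Next I would read off the sign of \eqref{switchcount}: the quantity $3 - 2\,d^{-}(v)$ is strictly negative precisely when $d^{-}(v) \geq 2$. In other words, if any vertex is incident to two or three negative edges, then switching at that vertex produces an equivalent signature with strictly fewer negative edges. This is exactly where the cubic hypothesis is used; the odd degree $3$ guarantees that the threshold $d^{-}(v) \geq 2$ strictly decreases the count rather than merely preserving it.

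Finally I would close the argument by contradiction. Suppose the edges of the minimum signature $E_{-}'$ do not form a matching. Then two negative edges share a common endpoint $v$, so $d^{-}(v) \geq 2$. By \eqref{switchcount}, the switched signature $E_{-}'^{\,\{v\}}$ lies in the same switching class $[E_{-}']$ yet has strictly fewer negative edges, contradicting the minimality of $E_{-}'$ recorded in \eqref{min ineq}. Therefore no two negative edges of $E_{-}'$ meet at a vertex, i.e.\ they form a matching. I do not anticipate a genuine obstacle here: the whole proof reduces to the one-line count in \eqref{switchcount} together with a minimality argument, and the only subtlety worth stating explicitly is that the parity of the cubic degree is what forces the strict decrease.
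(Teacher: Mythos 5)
Your proof is correct: the count $3-2\,d^{-}(v)$ is strictly negative whenever $d^{-}(v)\geq 2$, so a vertex meeting two negative edges contradicts minimality, which is precisely the point. The paper itself gives no proof of this lemma (it is cited from Zaslavsky's \emph{Six signed Petersen graphs} paper), and your switching-count argument is essentially the standard one behind that citation, with the role of the odd (cubic) degree correctly identified as the source of the strict decrease.
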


It is important to note that the frustration index is equal to the size of a minimum signature for every signed graph. The following result, that easily follows from Lemma~\ref{minimal}, says that the frustration index of a signed cubic graph $[G, E_{-}]$ is at most $\frac{|V(G)|}{2}$. 

\begin{theorem}\label{bound0}
Let $[G, E_{-}]$ be a signed cubic graph. Then $l[G, E_{-}] \leq \frac{|V(G)|}{2}$.
\end{theorem}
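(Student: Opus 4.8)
The plan is to combine the two facts recalled immediately before the statement: that the frustration index equals the size of a minimum signature, and that, by Lemma~\ref{minimal}, the edges of a minimum signature of a cubic graph form a matching. Once these are in hand, the bound follows at once from the elementary fact that a matching in a graph on $|V(G)|$ vertices uses at most $\frac{|V(G)|}{2}$ edges.

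Concretely, I would first fix a minimum signature $E_{-}' \in [E_{-}]$, so that by~\eqref{min ineq} we have $l[G, E_{-}] = |E_{-}'|$. Next, I invoke Lemma~\ref{minimal} to conclude that the edges of $E_{-}'$ form a matching $M$ in $G$. Since the edges of a matching are pairwise vertex-disjoint, their $2|M|$ endpoints are distinct vertices of $G$, whence $2|M| \leq |V(G)|$, that is $|M| \leq \frac{|V(G)|}{2}$. Combining these, $l[G, E_{-}] = |E_{-}'| = |M| \leq \frac{|V(G)|}{2}$, which is the claim.

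I expect essentially no obstacle here, since the substantive content is entirely packaged inside Lemma~\ref{minimal} and the remainder is the standard matching bound. The only point worth recording is that for a cubic graph $|V(G)|$ is necessarily even, because the degree-sum identity gives $3|V(G)| = 2|E(G)|$; hence $\frac{|V(G)|}{2}$ is an integer and coincides with $\left\lfloor \frac{|V(G)|}{2} \right\rfloor$. This observation is not required for the inequality itself, but it confirms that the bound is already sharp as an integer-valued quantity.
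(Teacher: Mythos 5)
Your proposal is correct and follows exactly the route the paper intends: it combines the identity $l[G,E_{-}] = \min_{E_{-}' \in [E_{-}]} |E_{-}'|$ from~\eqref{min ineq} with Lemma~\ref{minimal} (minimum signatures of cubic graphs are matchings) and the trivial bound $|M| \leq \frac{|V(G)|}{2}$ for a matching $M$; the paper leaves this chain implicit, stating only that the theorem ``easily follows'' from Lemma~\ref{minimal}. Your added remark that $|V(G)|$ is even for cubic graphs is a harmless and accurate bonus.
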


The bound obtained in Theorem~\ref{bound0} has an improvement for the cubic, triangle-free graphs.

\begin{theorem}\cite{Sivaraman}\label{bound1}
Let $[G, E_{-}]$ be a signed graph whose underlying graph $G$ is simple, cubic and triangle-free. Then $l[G, E_{-}] \leq \frac{3}{8}|V(G)|$.
\end{theorem}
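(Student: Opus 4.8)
The plan is to start from the two structural facts already available for cubic graphs and then extract the extra factor from triangle-freeness. By Theorem~\ref{FI = FN} the frustration index equals the frustration number, and by Lemma~\ref{minimal} a minimum signature of a cubic graph is a matching; hence, after switching to a minimum signature, the negative edges form a matching $M$ with $l[G,E_-]=|M|$. Writing $W=V(M)$ for the $2|M|$ matched vertices and $R=V(G)\setminus W$, every vertex of $W$ is incident to exactly one negative edge and two positive edges, while every vertex of $R$ is incident to three positive edges. Since $|V(G)|=|W|+|R|$, the desired inequality $l\le\frac{3}{8}|V(G)|$ is equivalent to $|W|\le\frac{3}{4}|V(G)|$, i.e.\ to the statement that at least a quarter of the vertices are left uncovered by a minimum-signature matching. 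I would aim to prove this reformulation, which already sharpens the bound $l\le\frac{1}{2}|V(G)|$ of Theorem~\ref{bound0}.

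The engine is minimality read as a cut condition. For every $X\subseteq V(G)$, switching $X$ flips exactly the boundary edges $\partial X$; since the signature is minimum, this cannot decrease the number of negative edges, so the number of negative (matched) edges in $\partial X$ is at most the number of positive edges in $\partial X$. The difficulty is that the most naive choices of $X$ --- a single matched vertex, or the two endpoints of a positive edge joining two matched vertices --- satisfy this inequality with equality and contribute no slack, because each matched vertex merely trades its one negative edge for one of its two positive edges under a flip. This is exactly where triangle-freeness must enter: for a negative edge $uv$ the four neighbours $u_1,u_2,v_1,v_2$ are forced to be distinct, so the positive edges leaving a matched pair cannot fold back locally, and the positive-edge subgraph $H$ induced on $W$ cannot be too dense. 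Quantitatively, applying the cut condition to a set $X$ that is connected in $H$, with $t$ vertices and $s$ internal positive edges and with all $M$-partners outside $X$, gives $t\le 2t-2s$, hence $s\le\frac{t}{2}$; combined with connectivity $s\ge t-1$ this forces $t\le 2$, so matched edges cannot be packed tightly along positive edges.

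The remaining and decisive step is a global counting/discharging argument. Counting positive edge-endpoints gives $2p+q=4|M|$ and $q\le 3|R|$, where $p$ is the number of positive edges inside $W$ and $q$ the number between $W$ and $R$; if $|W|$ exceeded $\frac{3}{4}|V(G)|$ then $p$ would be forced to exceed $\frac{3}{8}|V(G)|$, producing a dense positive subgraph on $W$ that I would contradict using the sparsity extracted from the cut condition together with triangle-freeness. I expect this aggregation to be the main obstacle: because the elementary flips are net-zero, the argument cannot proceed vertex by vertex but must charge over the connected components of $H$ and control the interaction of the overlapping neighbourhoods of distinct matched edges, since the clean bound $t\le 2$ above holds only when no $M$-partner lies inside the chosen block. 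Carrying this bookkeeping through should yield $|W|\le\frac{3}{4}|V(G)|$, and hence $l[G,E_-]\le\frac{3}{8}|V(G)|$.
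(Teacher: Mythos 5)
Your proposal cannot be compared with an in-paper argument, because the paper does not prove this theorem at all: it is quoted from Sivaraman's thesis \cite{Sivaraman}. So the proposal has to stand on its own, and it does not, because the step you yourself flag as ``the main obstacle'' is precisely the step carrying all the content, and the machinery you set up provably cannot supply it. Your sparsity bound $s\le \frac{t}{2}$ (hence $t\le 2$) applies only to sets $X\subseteq W$ that are connected in $H$ and contain no complete negative edge. For a connected $X$ containing $a$ negative edges with both endpoints inside, the identical computation gives $\mathrm{neg}(\partial X)=t-2a$ and $\mathrm{pos}(\partial X)=2t-2s$, hence only $s\le \frac{t}{2}+a$. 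Summing this over all components of $H$ (which is the aggregation you propose) yields $p\le \frac{|W|}{2}+|M|=2|M|$, and that is exactly the trivial bound already forced by your own identity $2p+q=4|M|$ together with $q\ge 0$. In other words, after aggregation the cut condition contributes nothing beyond endpoint counting, and no contradiction with $p>\frac{3}{8}|V(G)|$ can follow: in the regime you need to exclude, namely $|M|>\frac{3}{8}|V(G)|$, the inequalities $p>\frac{3}{8}|V(G)|$ and $p\le 2|M|$ are perfectly compatible.

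Symptomatic of the same problem is that triangle-freeness never enters any inequality in your argument: the $t\le2$ lemma uses only minimality and connectivity, and the distinctness of the four neighbours of a negative edge is never converted into a quantitative statement. Yet the theorem is false without triangle-freeness: the disjoint union of copies of $K_4$, each carrying two disjoint negative edges (the example the paper itself recalls in its conclusion), is a simple cubic graph with frustration index $\frac{1}{2}|V(G)|>\frac{3}{8}|V(G)|$, and there every component of $H$ is a $4$-cycle on two matched pairs --- i.e.\ $t=4$, $s=4$, $a=2$, satisfying $s\le \frac{t}{2}+a$ with equality --- exactly the kind of component your lemma does not constrain. So what is missing is not bookkeeping: any correct proof must extract a real quantitative consequence of triangle-freeness for components of $H$ that contain whole negative edges, and your proposal stops exactly where that work begins.
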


\begin{definition}\label{GPG}
\rm{Let $n$ and $k$ be positive integers such that  $2 \leq 2k <n$. Then the \textit{generalised Petersen graph} $P_{n,k}$ is defined to have the vertex set $V(P_{n,k}) = \{u_{i}, v_{i}~|~i\in[n]\},$ and edge set \linebreak[4] $ E(P_{n,k}) = \{u_{i}u_{i+1}, v_{i}v_{i+k},u_{i}v_{i}~|~i\in[n]\},$ where $[n]=\{0,1,...,n-1\}$ and the subscripts are read modulo $n$.}
\end{definition}

For instance a signed generalised Petersen graph $P_{7,1}$ with a minimum signature of size four is shown in Figure~\ref{P7}. The fact that the signature of Figure~\ref{P7} is minimum is proved in Lemma~\ref{tight1}.


%


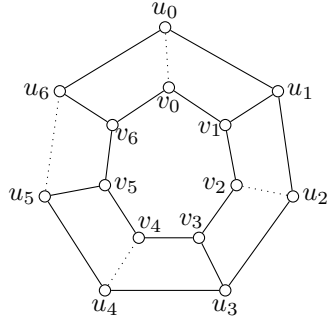
\begin{figure}[ht]
\centering
\begin{tikzpicture}[scale=0.5]
\node[vertex] (v1) at (11.9,7) {};\node[vertex] (v2) at (14.9,5.3) {};\node[vertex] (v3) at (15.3,2.5) {};\node[vertex] (v4) at (13.5,0) {};\node[vertex] (v5) at (10.3,0) {};
\node[vertex] (v6) at (8.7,2.5) {};\node[vertex] (v7) at (9.1,5.3) {};\node[vertex] (v8) at (12.0,5.4) {};\node[vertex] (v9) at (13.5,4.4) {};\node[vertex] (v10) at (13.8,2.8){};
\node[vertex] (v11) at (12.8,1.4) {};\node[vertex] (v12) at (11.2,1.4) {};\node[vertex] (v13) at (10.3,2.8) {};\node[vertex] (v14) at (10.5,4.4) {};

\node [above] at (v1) {$u_{0}$};\node [right] at (v2) {$u_{1}$};\node [right] at (v3) {$u_{2}$};\node [below] at (v4) {$u_{3}$};\node [below] at (v5) {$u_{4}$};
\node [left] at (v6) {$u_{5}$};\node [left] at (v7) {$u_{6}$};\node [below] at (v8) {$v_{0}$};\node at (13.10,4.3) {$v_{1}$};\node [left] at (v10) {$v_{2}$};
\node at (12.6,1.85) {$v_{3}$};\node at (11.5,1.75) {$v_{4}$};\node [right] at (v13) {$v_{5}$};\node at (10.9,4.1) {$v_{6}$};

\foreach \from/\to in {v1/v2,v2/v3,v3/v4,v4/v5,v5/v6,v7/v1,v8/v9,v9/v10,v10/v11,v11/v12,v12/v13,v13/v14,v14/v8,v2/v9,v4/v11,v6/v13,v7/v14} \draw (\from) -- (\to);

\draw [dotted] (v6) -- (v7);\draw [dotted] (v1) -- (v8);\draw [dotted] (v3) -- (v10);\draw [dotted] (v5) -- (v12);

\end{tikzpicture}
\caption{A signed $P_{7,1}$ with a signature of size four}\label{P7}
\end{figure}

We call the vertices $u_{0},u_{1},...,u_{n-1}$ to be \textit{u-vertices} and the vertices $v_{0},v_{1},...,v_{n-1}$ to be \textit{v-vertices}. From the definition, it is clear that $P_{n,k}$ is a cubic graph and $P_{5,2}$ is the well-known Petersen graph. The edges $u_{i}v_{i}$ for $i \in [n]$ are called the \textit{spokes} and the set of spokes is denoted by $S_{s}$. The cycle induced by $u\text{-vertices}$ is called the \textit{outer cycle} of $P_{n,k}$ and is denoted by $C_{o}$. The cycle(s) induced by $v\text{-vertices}$ is(are) called the \textit{inner cycle(s)} of $P_{n,k}$. If $\text{gcd}(n,k) = d$ then the subgraph induced by $v\text{-vertices}$ consists of $d$ pairwise disjoint $\frac{n}{d}$-cycles. If $d >1$ then no two vertices among $v_{0}, v_{1},..., v_{d-1}$ can be in the same $\frac{n}{d}$-cycle.

\section{Upper Bound of $D(P_{n,k})$ for $\gcd(n,k) = 1$}\label{Bound_gcd(n,k) = 1}

\begin{definition}\label{$G_n$}
\rm{For $n\geq3$, the graph $G_{n}$ is defined to have the vertex set $V(G_{n}) = \{u_{i},v_{i}~|~i\in[n] \}$, and edge set $ E(G_{n}) = \{u_{i}u_{i+1}, u_{i}v_{i}~|~i\in[n]\},$ where the subscripts are read modulo $n$.}
\end{definition}

\begin{figure}[h]
\centering
\begin{tikzpicture}
\node[vertex] (v1) at (0,0) {};
\node [left] at (v1) {$u_{0}$};
\node[vertex] (v2) at (1,0) {};
\node [above] at (v2) {$u_{1}$};
\node[vertex] (v3) at (2,0) {};
\node [above] at (v3) {$u_{2}$};
\node[vertex] (v4) at (3,0) {};
\node [above] at (v4) {$u_{3}$};
\node[vertex] (v5) at (4,0) {};
\node [right] at (v5) {$u_{4}$};
\node[vertex] (v6) at (0,-1) {};
\node [below] at (v6) {$v_{0}$};
\node[vertex] (v7) at (1,-1) {};
\node [below] at (v7) {$v_{1}$};
\node[vertex] (v8) at (2,-1) {};
\node [below] at (v8) {$v_{2}$};
\node[vertex] (v9) at (3,-1) {};
\node [below] at (v9) {$v_{3}$};
\node[vertex] (v10) at (4,-1) {};
\node [below] at (v10) {$v_{4}$};
\draw (3.97,0.06) to[out=140,in=40] (0.03,0.06);
\foreach \from/\to in {v1/v2,v2/v3,v3/v4,v4/v5,v1/v6,v2/v7,v3/v8,v4/v9,v5/v10} \draw (\from) -- (\to);
\end{tikzpicture}
\caption{The graph $G_{5}$}
\label{Figure2}
\end{figure}
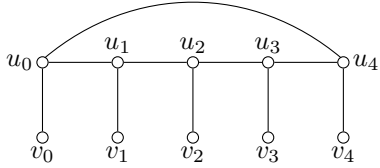

 For instance, the graph $G_{5}$ is shown in Figure~\ref{Figure2}. From the definition, it is clear that the graph $G_{n}$ is a sub-cubic graph, \textit{i.e.,} each vertex of $G_{n}$ has degree at most three. Note that $G_{n}$ is isomorphic to a subgraph of $P_{n,k}$ and that subgraph can be obtained by deleting all the edges of the inner cycle(s) of $P_{n,k}$. Let $C_{n}$ denote the cycle $u_{0}u_{1}...u_{n-1}u_{0}$ of $G_{n}$.

By Lemma~\ref{minimal}, it is clear that the edges of any minimum signature of $G_{n}$ form a matching. Therefore a minimum signature of $G_{n}$ is either an empty signature or a signature of size one, since the cycle $C_{n}$ can be either positive or negative, and any negative edge incident to a vertex $v_{i}$ can be made positive by switching that vertex. This implies that $l[G_{n}, E_{-}] \leq 1$.

We say $[G_{n}, E_{-}]$ is balanced or unbalanced according as the cycle $C_{n}$ is positive or negative, respectively. Further, if we do not allow switching operation by $v\text{-vertices}$ then the following lemma gives an upper bound on the size of a minimum signature of $G_{n}$.

\begin{lemm}\label{lemma1o4}
Let $[G_{n}, E_{-}]$ be a signed graph and let switching operation be not allowed by $v\text{-vertices}$. Then the maximum size of a minimum signature is either $\left\lfloor \frac{n}{2} \right\rfloor$ or $\left\lfloor \frac{n}{2} \right\rfloor +1$ according as $C_{n}$ is positive or negative in $[G_{n}, E_{-}]$, respectively.
\end{lemm}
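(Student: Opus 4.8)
The plan is to prove the bound directly by producing, for an arbitrary signature, an equivalent one (under $u$-vertex switching only) of the asserted size; since the size of a minimum signature is the smallest size attained in the switching class, exhibiting one such small representative suffices. First I would record how switching acts on the two kinds of edges. Writing $0/1$ indicators $a_i$ for the sign of the outer edge $u_i u_{i+1}$ and $b_i$ for the sign of the spoke $u_i v_i$ (with $1$ meaning negative), switching the set $X$ of $u$-vertices with indicator vector $x$ sends $a_i \mapsto a_i + x_i + x_{i+1}$ and $b_i \mapsto b_i + x_i$, all modulo $2$. The only cycle of $G_n$ is $C_n$, and its sign is $\sum_i a_i \bmod 2$; this is unchanged because $\sum_i (x_i + x_{i+1}) = 2\sum_i x_i \equiv 0$, so whether $C_n$ is positive or negative is a property of the whole class.

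Next I would put the outer cycle into a canonical form using only $u$-switchings. Solving the recurrence $x_{i+1} = x_i + a_i$ around $C_n$ gives $x_i = x_0 + \sum_{\ell < i} a_\ell$, and closing up the cycle is consistent precisely when $\sum_i a_i \equiv 0$, that is, when $C_n$ is positive; in that case the two choices $x_0 \in \{0,1\}$ yield exactly two switchings, and each makes every outer edge positive. When $C_n$ is negative I would instead fix an index $j$ with $a_j = 1$ and impose $a_i' = 0$ only for $i \ne j$; running the same recurrence around the cycle then forces $a_j' = \sum_i a_i \equiv 1$, so the system is again consistent and again has exactly two solutions. In either case one obtains a representative whose outer part contributes $0$ negative edges (positive $C_n$) or exactly $1$ negative edge (negative $C_n$).

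The final step is to exploit the remaining freedom on the spokes. In both cases the two admissible switchings differ by the all-ones vector $x \mapsto x + \mathbf{1}$, i.e. by switching every $u$-vertex simultaneously; this leaves each outer edge unchanged (it is flipped at both of its endpoints) while flipping every spoke (flipped at its single $u$-endpoint, as $v$-switching is forbidden). Hence the two representatives have complementary numbers of negative spokes, $t$ and $n-t$, and choosing the smaller gives at most $\left\lfloor \frac{n}{2} \right\rfloor$ negative spokes. Adding the outer contribution yields a representative of size at most $\left\lfloor \frac{n}{2} \right\rfloor$ when $C_n$ is positive, and at most $\left\lfloor \frac{n}{2} \right\rfloor + 1$ when $C_n$ is negative, which is the claimed bound.

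I expect the delicate point to be this last step: one must be sure that canonicalising the outer cycle still leaves the global switch $x \mapsto x + \mathbf{1}$ available, so that the spokes can genuinely be halved for free. This rests on the homogeneous system $x_i + x_{i+1} = 0$ (for all $i$) having solution space exactly the constants, which is where it matters that $C_n$ is a single spanning cycle of the $u$-vertices. To see that the stated value cannot be improved in general, I would exhibit the signature with every outer edge negative and every spoke positive; here $C_n$ is positive exactly when $n$ is even, and a short count of a maximum independent set (equivalently a maximum matching) on $C_n$ shows that its minimum size equals $\left\lfloor \frac{n}{2} \right\rfloor$ for even $n$ and $\left\lfloor \frac{n}{2} \right\rfloor + 1$ for odd $n$, so the bound is attained for the corresponding sign of $C_n$.
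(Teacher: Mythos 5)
Your proof is correct, and its core mechanism is the one the paper uses: normalize the outer cycle by $u$-switchings so that it carries $0$ or $1$ negative edges, then exploit the fact that the only remaining freedom is the simultaneous switch of all $u$-vertices, which fixes every outer edge and complements the set of negative spokes, so the better of the two normalized representatives has at most $\left\lfloor \frac{n}{2}\right\rfloor$ negative spokes. The execution differs in ways worth noting, though. The paper first invokes Zaslavsky's lemma that minimum signatures of cubic graphs form matchings, and in the negative case argues by contradiction: assuming the canonical-form minimum signature has at least $\left\lfloor \frac{n}{2}\right\rfloor+2$ edges, switching $\{u_1,\dots,u_{n-1}\}$ produces a smaller equivalent signature. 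Your GF(2) formulation replaces both ingredients: by identifying the solution set of the canonicalization system as a coset of the constant vectors $\{\mathbf{0},\mathbf{1}\}$ (your flagged ``delicate point,'' which you resolve correctly, since the homogeneous constraints live on a connected cycle, or path in the negative case, of $u$-vertices), you can minimize directly over exactly two representatives, with no contradiction and no appeal to the matching lemma, so your argument is more self-contained. You also supply a tightness construction (all outer edges negative, all spokes positive, minimum $\lceil n/2\rceil$ by the independent-set count), which the paper omits entirely even though the lemma is phrased as an exact maximum; be aware, however, that this example certifies attainment only for even $n$ in the positive case and odd $n$ in the negative case, so the exactness claim in the remaining parities is left unverified in your write-up just as it is in the paper's.
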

\begin{proof}
Let $[G_{n}, E_{-}]$ be a signed graph. Also let $E_{-}'$ be a minimum signature, switching equivalent to $E_{-}$, such that edges of $E_{-}'$ form a matching in $G_{n}$. The existence of such $E_{-}'$ is assured by Lemma~\ref{minimal}.

If the cycle $C_{n}$ is positive in $[G_{n}, E_{-}]$ then by \textit{switching} $u\text{-vertices}$, if needed, we can make all the edges of $C_{n}$ positive. Now the negative edges in the resulting signed graph are of the form $u_{i}v_{i}$ only. Therefore it is obvious that the maximum number of edges in a minimum signature is $\left\lfloor \frac{n}{2} \right\rfloor$. Hence $|E_{-}'| \leq \left\lfloor \frac{n}{2} \right\rfloor$.

If the cycle $C_{n}$ is negative in $[G_{n}, E_{-}]$ then by \textit{switching operation}, if needed, we can make any pre-chosen edge of $C_{n}$ negative and rest of the edges positive. Let that negative edge of $C_{n}$ be $u_{0}u_{1}$. Let $E_{-}'$ be a minimum signature of the resulting signed graph. Note that the edges of $E_{-}'$ form a matching in which one edge is $u_{0}u_{1}$ and other edges of $E_{-}'$ are some $u_{i}v_{i}\text{'s}$. Now take $E_{-}'' = E_{-}' \setminus \{u_{0}u_{1}\}$, then it is obvious that $|E_{-}'| = |E_{-}''|+1$. Further note that $E_{-}'' \subseteq S$, where $S= \{u_{2}v_{2}, u_{3}v_{3},...,u_{n-1}v_{n-1}\}$ and $|S|= n-2$. To complete the proof, we need to show that $|E_{-}''| \leq \left\lfloor \frac{n}{2} \right\rfloor$.

Suppose on the contrary that $|E_{-}''| \geq \left\lfloor \frac{n}{2} \right\rfloor +1$. Therefore we have 
\begin{equation}\label{equation1}
|E_{-}'| \geq \left\lfloor \frac{n}{2} \right\rfloor +2.
\end{equation}

Since $u_{0}u_{1} \in E_{-}'$ and $E_{-}'' \subseteq S$ so by switching each vertex of the set $\{u_{1}, u_{2},...,u_{n-1}\}$, we get a signature $E_{-}^{*}$ such that $E_{-}^{*} \sim E_{-}'$ and $E_{-}^{*} = (S \setminus E_{-}'') \cup \{u_{1}v_{1}, u_{n-1}u_{0}\}$. This implies that
\begin{equation*}
\begin{split}
|E_{-}^{*}| & = |S| - |E_{-}''| +2 \\
 & = n - |E_{-}''| \\
 & \leq n - \Big(\left\lfloor \frac{n}{2} \right\rfloor +1\Big) \\ & \leq \left\lfloor \frac{n}{2} \right\rfloor.
\end{split}
\end{equation*}

Thus we have a signature $E_{-}^{*}$ such that $E_{-}^{*} \sim E_{-}'$ and $|E_{-}^{*}| \leq \left\lfloor \frac{n}{2} \right\rfloor$. This is a contradiction to the fact that $E_{-}'$ is minimum and $|E_{-}'| \geq \left\lfloor \frac{n}{2} \right\rfloor +2$. Thus $|E_{-}''| \leq \left\lfloor \frac{n}{2} \right\rfloor$, and this completes the proof.
\end{proof}

Now we find an upper bound for the maximum frustration of $P_{n,k}$ in terms of $n$, where $\gcd(n,k) = 1$. 

\begin{theorem}\label{gcd(n,k)=1}
Let $\gcd(n,k) = 1$. Then $D(P_{n,k}) \leq \left\lfloor \frac{n}{2} \right\rfloor +1.$
\end{theorem}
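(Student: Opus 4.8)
The plan is to bound, for an arbitrary signature $E_{-}$ of $P_{n,k}$, the size of a minimum signature equivalent to $E_{-}$; by \eqref{min ineq} this size equals $l[P_{n,k},E_{-}]$, and maximising over all signatures then yields the bound on $D(P_{n,k})$ required by \eqref{max ineq}. Since $\gcd(n,k)=1$, the $v$-vertices induce a single $n$-cycle. The first step is to switch \emph{only} $v$-vertices so as to make this inner cycle as positive as possible: if it is positive we make all of its edges positive, and if it is negative we leave exactly one negative edge on it. Switching $v$-vertices does not change the sign of any outer edge, so the sign of the outer cycle $C_{o}$ is unaffected by this step. Deleting the inner edges now leaves a copy of $G_{n}$, and from this point on I would switch only $u$-vertices, so that the cleaned inner cycle stays fixed. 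This is exactly the setting of Lemma~\ref{lemma1o4}, applied to $G_{n}$ with the spoke signs produced by the first step.

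By Lemma~\ref{lemma1o4}, the $u$-switchings reduce the number of negative outer and spoke edges to at most $\left\lfloor \frac{n}{2} \right\rfloor$ when $C_{o}$ is positive, and to at most $\left\lfloor \frac{n}{2} \right\rfloor+1$ when $C_{o}$ is negative. Adding the contribution of the inner cycle, which is $0$ or $1$, splits the argument into four cases according to the signs of $C_{o}$ and of the inner cycle. In the three cases in which the two cycles are not both negative, the total is at most $\left\lfloor \frac{n}{2} \right\rfloor+1$, as required; note that the extremal signed $P_{7,1}$ of Figure~\ref{P7} realises the case ``$C_{o}$ negative, inner positive'', which already attains $\left\lfloor \frac{n}{2} \right\rfloor+1$, so no better bound is possible in general.

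The one case that this bookkeeping does not settle, and which I expect to be the main obstacle, is when \emph{both} $C_{o}$ and the inner cycle are negative: here the naive sum is $\left\lfloor \frac{n}{2} \right\rfloor+2$, one too many. To remove the extra edge I would argue by contradiction in the spirit of the proof of Lemma~\ref{lemma1o4}. Assume a minimum signature has size $\left\lfloor \frac{n}{2} \right\rfloor+2$; by Lemma~\ref{minimal} it is a matching, and after switching we may take its negative edges to consist of one outer edge $u_{0}u_{1}$, one inner edge, and exactly $\left\lfloor \frac{n}{2} \right\rfloor$ spokes, none of which meets $u_{0}$ or $u_{1}$. Switching the set $\{u_{1},u_{2},\dots,u_{n-1}\}$ then replaces the negative spokes among $u_{1},\dots,u_{n-1}$ by the complementary set, while leaving a single negative outer edge and not touching the inner edge. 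When $n$ is even this already produces a strictly smaller equivalent signature, contradicting minimality.

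When $n$ is odd the two counts tie exactly at $\left\lfloor \frac{n}{2} \right\rfloor+2$, and breaking this tie is the crux of the argument. Here I would exploit the extra freedom that was not used above: the single negative inner edge may be placed at any position on the inner cycle, and relocating it by a $v$-arc switch simultaneously flips a chosen spoke. The aim would be to show that, for every configuration attaining the putative minimum, either the complemented outer/spoke configuration fails to be a matching (so that one more vertex switch reduces the count, contradicting Lemma~\ref{minimal}), or a suitable joint choice of the inner-edge position and the outer-arc switch lowers the spoke count to $\left\lfloor \frac{n}{2} \right\rfloor-1$, giving total $\left\lfloor \frac{n}{2} \right\rfloor+1$. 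Carrying out this parity analysis carefully, so that the $u$- and $v$-switchings are coordinated rather than applied sequentially, is the step I expect to require the most work.
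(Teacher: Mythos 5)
Your overall strategy is the same as the paper's: normalise each cycle to at most one negative edge, apply Lemma~\ref{lemma1o4} to the outer-plus-spokes copy of $G_{n}$, and split into cases according to the signs of $C_{o}$ and $C_{i}$. The three easy cases and your even-$n$ complementation argument are correct. But there is a genuine gap exactly where you flag it: when both cycles are negative and $n$ is odd, you never produce the switching that breaks the tie --- you only state an ``aim'' and defer a ``parity analysis'' to future work. Since this is the generic hard case of the theorem (not a degenerate corner), the proof is incomplete as it stands.

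The missing idea is a single, uniform refinement of your switching set, and it is what the paper's Case~3(a) does: switch $\{u_{1},u_{2},\dots,u_{n-1},v_{r}\}$ rather than $\{u_{1},\dots,u_{n-1}\}$, where $v_{r}v_{r+k}$ is the unique negative inner edge. Adding $v_{r}$ does two things at once: it relocates the negative inner edge to $v_{r-k}v_{r}$ (still exactly one inner edge), and --- because $u_{r}$ is switched as well --- it leaves the spoke $u_{r}v_{r}$ fixed, so index $r$ drops out of the spoke complementation. The resulting equivalent signature then has $(n-2)-|E_{-}''|$ negative spokes instead of $(n-1)-|E_{-}''|$, hence total size $n-|E_{-}''|\leq n-\left\lfloor \frac{n}{2} \right\rfloor\leq\left\lfloor \frac{n}{2} \right\rfloor+1<\left\lfloor \frac{n}{2} \right\rfloor+2$, a strict contradiction for \emph{both} parities of $n$; no parity analysis is needed. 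Separately, your sketch ignores the boundary positions of the inner edge: if $v_{r}$ or $v_{r+k}$ coincides with $v_{0}$ or $v_{1}$ (the ends of the negative outer edge), the spoke exclusions forced by the matching overlap, the counting changes, and the uniform argument goes off by one. The paper treats this in a separate sub-case: for $v_{r}=v_{0}$ it first shows that $u_{n-1}v_{n-1}$ cannot lie in a minimum signature, and then uses the modified switching set $\{u_{1},\dots,u_{n-2},v_{k}\}$ (or $\{u_{1},\dots,u_{n-2},v_{1}\}$ when $k=1$). Any completed version of your argument would need this case distinction as well.
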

\begin{proof}
Let $[P_{n,k}, E_{-}]$ be a signed generalised Petersen graph, where $\gcd(n,k) = 1$. Note that $P_{n,k}$ has only one inner cycle of length $n$ since $\gcd(n,k) = 1$. By $C_{o}$ and $C_{i}$ we denote the outer cycle and the inner cycle of $P_{n,k}$, \textit{i.e.,} $C_{o} = u_{0}u_{1}...u_{n-1}u_{0}$ and $C_{i} = v_{0}v_{k}v_{2k}...v_{n-k}v_{0}$, respectively. 

Let $G_{o}$ be the subgraph of $P_{n,k}$ such that $V(G_{o}) = V(P_{n,k})$ and $E(G_{o}) = E(C_{o}) \cup S_{s}$. Also let $G_{i}$ be the subgraph of $P_{n,k}$ such that $V(G_{i}) = V(P_{n,k})$ and $E(G_{i}) = E(C_{i}) \cup S_{s}$. Note that the graphs $G_{o}$ and $G_{i}$ are both isomorphic to $G_{n}$. We prove the theorem by considering the following cases.

\vspace*{0.3cm}
\noindent
\textbf{Case 1.} Assume that both the cycles $C_{o}$ and $C_{i}$ are positive in $[P_{n,k}, E_{-}]$. By switching $u\text{-vertices}$ and $v\text{-vertices}$, if needed, we can make all the edges of both $C_{o}$ and $C_{i}$ positive. Thus all the edges of the resulting signature are spokes only. Therefore we have $l[P_{n,k}, E_{-}] < \left\lfloor \frac{n}{2} \right\rfloor +1$, since out of $n$ spokes at most $\left\lfloor \frac{n}{2} \right\rfloor$ spokes can be negative (up to switching).

\vspace*{0.3cm}
\noindent
\textbf{Case 2.} Assume that the cycle $C_{o}$ is positive and $C_{i}$ is negative in $[P_{n,k}, E_{-}]$. Without loss of generality, assume that all the edges of $C_{o}$ are positive. Next, if some vertex $v_{i}$ is incident to more than one negative edge then we switch that vertex $v_{i}$ to reduce the number of negative edges. We keep doing this operation until every vertex $v_{i}$ is incident to at most one negative edge. Now let $[P_{n,k}, E_{-}']$ denote the resulting signed graph, where $E_{-}'$ is switching equivalent to $E_{-}$. The edges of $E_{-}'$ form a matching and $E_{-}' \subset E(G_{i})$. Note that to reduce the signature $E_{-}'$ we do not need to switch any vertex $u_{i}$. Since $G_{i}$ is isomorphic to $G_{n}$ and we do not switch any $u\text{-vertex}$, so by Lemma~\ref{lemma1o4}, the maximum number of negative edges in a minimum signature, equivalent to $E_{-}'$, is $\left\lfloor \frac{n}{2} \right\rfloor +1$. This implies that $l[P_{n,k}, E_{-}] \leq \left\lfloor \frac{n}{2} \right\rfloor +1$.

Similarly, if the cycle $C_{i}$ is positive and $C_{o}$ is negative in $[P_{n,k},  E_{-}]$ then also $l[P_{n,k}, E_{-}] \leq \left\lfloor \frac{n}{2} \right\rfloor +1$.

\vspace*{0.3cm}
\noindent
\textbf{Case 3.} Assume that both the cycles $C_{o}$ and $C_{i}$ are negative in $[P_{n,k}, E_{-}]$. By switching some $u\text{-vertices}$, if needed, we make the edge $u_{0}u_{1}$ negative and rest of the edges of $C_{o}$ positive. Next, we switch some $v_{i}\text{'s}$, if needed, such that only one edge of $C_{i}$ is negative and remaining edges are positive. Let that negative edge of $C_{i}$ be $v_{r}v_{r+k}$, for some $0 \leq r \leq n-1$.

We will complete the proof by showing that the number of negative edges in a minimum signature containing the edges $u_{0}u_{1}$ and $v_{r}v_{r+k}$ can be at most $\left\lfloor \frac{n}{2} \right\rfloor + 1$. 

Further, let $E_{-}'$ be a minimum signature, equivalent to $E_{-}$, such that edges of $E_{-}'$ form a matching and except the edges $u_{0}u_{1}$ and $v_{r}v_{r+k}$, its all other edges are some spokes only. Take $E_{-}'' = E_{-}' \setminus \{u_{0}u_{1}, v_{r}v_{r+k}\}$ so that $|E_{-}'| = |E_{-}''| +2$. Note that the edges of $E_{-}''$ are some spokes only. To complete the proof, it is enough to show that the $|E_{-}''| \leq \left\lfloor \frac{n}{2} \right\rfloor - 1$. To do so we consider two sub-cases.

\vspace*{0.3cm}
\noindent
\textbf{(a)} Let neither $v_{r}$ nor $v_{r+k}$ be equal to $v_{0}$ and $v_{1}$. Note that $E_{-}'' \subseteq S$, where $S$ is a subset of $S_{s}$. More precisely, $S = S_{s} \setminus \{u_{0}v_{0}, u_{1}v_{1}, u_{r}v_{r}, u_{r+k}v_{r+k}\}$ so that $|S| = n-4$. Suppose on the contrary that $|E_{-}''| \geq \left\lfloor \frac{n}{2} \right\rfloor$, and so $|E_{-}'| \geq \left\lfloor \frac{n}{2} \right\rfloor + 2$.

By switching each vertex of the set $\{u_{1}, u_{2},..., u_{n-1}, v_{r}\}$, we get an equivalent signature $E_{-}^{*}$, where $E_{-}^{*} = (S \setminus E_{-}'') \cup \{u_{1}v_{1}, v_{r-k}v_{r}, u_{r+k}v_{r+k}, u_{n-1}u_{0}\}$. Therefore we have 
\begin{equation*}
\begin{split}
|E_{-}^{*}| & = |S| - |E_{-}''| +4 \\
 & = n-4 - |E_{-}''| +4 \\
 & \leq n - \left\lfloor \frac{n}{2} \right\rfloor \\
 & \leq \left\lfloor \frac{n}{2} \right\rfloor +1.
\end{split}
\end{equation*} 
But this is a contradiction because $E_{-}'$ is minimum and $|E_{-}'| \geq \lfloor \frac{n}{2} \rfloor + 2$. Therefore $|E_{-}''| \leq \left\lfloor \frac{n}{2} \right\rfloor - 1$, and this implies that $|E_{-}'| \leq \left\lfloor \frac{n}{2} \right\rfloor + 1$. 

\vspace*{0.3cm}
\noindent
\textbf{(b)} Let one of the end points of the edge $v_{r}v_{r+k}$ be either $v_{0}$ or $v_{1}$. Let $v_{r} = v_{0}$, then the spoke $u_{n-1}v_{n-1}$ cannot be included in $E_{-}''$ because $E_{-}'$ is a minimum signature. Otherwise, if $u_{n-1}v_{n-1} \in E_{-}''$ then switching each vertex of the set $\{u_{n-1}, u_{0}, v_{0}\}$ will give us an equivalent signature of size one less than the size of $E_{-}'$, and this is a contradiction as $E_{-}'$ is minimum. 

\vspace*{0.3cm}
\noindent
\textbf{b(i)} Let $k>1$, then $E_{-}'' \subseteq S$, where $S = S_{s} \setminus \{u_{0}v_{0}, u_{1}v_{1}, u_{k}v_{k}, u_{n-1}v_{n-1}\}$ and $|S| = n-4$. Let us suppose on the contrary that $|E_{-}''| \geq \left\lfloor \frac{n}{2} \right\rfloor$ so that $|E_{-}'| \geq \left\lfloor \frac{n}{2} \right\rfloor + 2$.

By switching each vertex of the set $\{u_{1}, u_{2},..., u_{n-2}, v_{k}\}$, we get an equivalent signature $E_{-}^{*}$, where $E_{-}^{*} = (S \setminus E_{-}'') \cup \{u_{1}v_{1}, v_{k}v_{2k}, u_{n-2}u_{n-1}\}$. Therefore we have 
\begin{equation*}
\begin{split}
|E_{-}^{*}| & = |S| - |E_{-}''| +3 \\
 & = n-4 - |E_{-}''| +3 \\
 & \leq n-1 - \left\lfloor \frac{n}{2} \right\rfloor \\
 & \leq \left\lfloor \frac{n}{2} \right\rfloor.
\end{split}
\end{equation*} 

\vspace*{0.3cm}
\noindent
\textbf{b(ii)} If $k=1$ then $E_{-}'' \subseteq S$, where $S = S_{s} \setminus \{u_{0}v_{0}, u_{1}v_{1}, u_{n-1}v_{n-1}\}$ and $|S| = n-3$. Let us suppose on the contrary that $|E_{-}''| \geq \left\lfloor \frac{n}{2} \right\rfloor$ so that $|E_{-}'| \geq \left\lfloor \frac{n}{2} \right\rfloor + 2$.

By switching each vertex of the set $\{u_{1},u_{2},...,u_{n-2}, v_{1}\}$, we get an equivalent signature $E_{-}^{*}$ such that $E_{-}^{*} = (S \setminus E_{-}'') \cup \{v_{1}v_{2}, u_{n-2}u_{n-1}\}$. Therefore we have 
\begin{equation*}
\begin{split}
|E_{-}^{*}| & = |S| - |E_{-}''| +2 \\
 & = n-3 - |E_{-}''| +2 \\
 & \leq n-1 - \left\lfloor \frac{n}{2} \right\rfloor \\
 & \leq \left\lfloor \frac{n}{2} \right\rfloor.
\end{split}
\end{equation*}

\noindent
In both \textbf{b(i)} and \textbf{b(ii)}, we get a contradiction because $E_{-}'$ is minimum and $|E_{-}'| \geq \left\lfloor \frac{n}{2} \right\rfloor + 2$. Hence $|E_{-}''| \leq \left\lfloor \frac{n}{2} \right\rfloor - 1$ and this implies that $|E_{-}'| \leq \left\lfloor \frac{n}{2} \right\rfloor + 1$.

Similarly, if $v_{r} = v_{1}$ or $v_{r} = v_{n-k}$ or $v_{r} = v_{n-k+1}$ then also it can be shown that $|E_{-}'| \leq \left\lfloor \frac{n}{2} \right\rfloor + 1$. Hence from all these cases, we conclude that the  number of negative edges in a minimum signature, equivalent to $E_{-}$, is at most $\left\lfloor \frac{n}{2} \right\rfloor + 1$. Since  $E_{-}$ is an arbitrary signature, we have $$D(P_{n,k}) \leq \left\lfloor \frac{n}{2} \right\rfloor +1.$$ This completes the proof.
\end{proof}

\noindent
\textbf{Remark.} It is interesting to note that $P_{n,k}$ has a special structure when $k=1$. The graph $P_{n,1}$ is the $n\text{-gonal}$ prism. Therefore $P_{n,1}$ can be drawn as a ring of $n$ quadrangles. For example $P_{7,1}$ is the heptagonal prism and it has been drawn as a ring of seven quadrangles in Figure~\ref{P7}.

In the following lemma we show that there exist a signed $P_{n,1}$ whose frustration index is $\left\lfloor \frac{n}{2} \right\rfloor +1$.

\begin{lemm}\label{tight1}
There exists a signature $E_{-}$ such that $l[P_{n,1}, E_{-}] = \left\lfloor \frac{n}{2} \right\rfloor+1$.
\end{lemm}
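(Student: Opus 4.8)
The plan is to combine the upper bound already in hand with an explicit extremal signature. Since $\gcd(n,1)=1$, Theorem~\ref{gcd(n,k)=1} gives $l[P_{n,1},E_{-}] \le \left\lfloor \frac{n}{2}\right\rfloor + 1$ for \emph{every} signature, so it suffices to produce one signature $E_{-}$ for which the reverse inequality $l[P_{n,1},E_{-}] \ge \left\lfloor \frac{n}{2}\right\rfloor + 1$ holds. I would work with the quadrilateral faces of the prism: for $i\in[n]$ let $Q_{i}$ be the $4$-cycle $u_{i}u_{i+1}v_{i+1}v_{i}$. The combinatorial fact driving the whole argument is the incidence pattern of these faces, namely that each spoke lies on exactly two of the $Q_{i}$ (on $Q_{i-1}$ and $Q_{i}$), while each outer edge and each inner edge lies on exactly one $Q_{i}$.

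Next I would choose $E_{-}$ so that every $Q_{i}$ is negative, the outer cycle $C_{o}$ is negative, and the inner cycle $C_{i}$ is negative if and only if $n$ is even. Such a signature exists because the prescribed cycle signs are mutually consistent: multiplying the signs of all $n$ quadrilaterals cancels every spoke and leaves $\sigma(C_{o})\sigma(C_{i})$, so the requirement $\prod_{i}\sigma(Q_{i})=\sigma(C_{o})\sigma(C_{i})$ reads $(-1)^{n}=\sigma(C_{o})\sigma(C_{i})$, which is exactly matched by our parity choice (both large cycles negative when $n$ is even, precisely one when $n$ is odd). Concretely one can take the alternating-spoke signature: for odd $n$, make the spokes $u_{0}v_{0},u_{2}v_{2},\dots,u_{n-3}v_{n-3}$ and the single outer edge $u_{n-2}u_{n-1}$ negative, which is exactly the signature drawn for $n=7$ in Figure~\ref{P7}; for even $n$ one additionally flips a single inner edge so that $C_{i}$ becomes negative as well.

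For the lower bound I would argue via \eqref{min ineq}: let $E_{-}'$ be any signature switching-equivalent to $E_{-}$, so it is enough to show $|E_{-}'| \ge \left\lfloor \frac{n}{2}\right\rfloor + 1$. Switching a vertex set flips exactly the edges with one endpoint in that set, so every cycle meets $E_{-}'$ in a set of the same parity as it meets $E_{-}$; in particular $|E_{-}'\cap Q_{i}|$ is odd (hence at least $1$) for every $i$, $|E_{-}'\cap C_{o}|$ is odd, and $|E_{-}'\cap C_{i}|$ is odd when $n$ is even. Writing $s,a,b$ for the numbers of spokes, outer edges, and inner edges in $E_{-}'$, so that $|E_{-}'|=s+a+b$, the incidence counts give $\sum_{i}|E_{-}'\cap Q_{i}| = 2s+a+b \ge n$, while the cycle parities force $a\ge 1$ and, when $n$ is even, $b\ge 1$. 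Hence $|E_{-}'| = s+a+b \ge \tfrac{n-a-b}{2}+a+b = \tfrac{n+a+b}{2} \ge \left\lfloor \frac{n}{2}\right\rfloor + 1$, the last step using $a+b\ge 2$ for even $n$ and $a+b\ge 1$ for odd $n$.

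The hard part is the parity bookkeeping rather than any heavy computation. The naive signature with all quadrilaterals negative but both large cycles positive only yields $\left\lfloor \frac{n}{2}\right\rfloor$ (alternating spokes suffice to make all faces negative), and the extra $+1$ comes precisely from forcing the large cycle(s) to be negative. I would therefore take care to (i) verify that the chosen assignment of cycle signs is realizable, using that $Q_{0},\dots,Q_{n-1},C_{o}$ form a cycle basis while $C_{i}$ is their sum, and (ii) check the final inequality separately in the two parities, since the floor behaves differently for even and odd $n$. Once this is done, combining with the upper bound from Theorem~\ref{gcd(n,k)=1} yields $l[P_{n,1},E_{-}] = \left\lfloor \frac{n}{2}\right\rfloor + 1$, completing the proof.
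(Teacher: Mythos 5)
Your proof is correct and follows essentially the same route as the paper's: the same extremal signatures (your odd-$n$ signature is literally the paper's $E_{-}^{1}$, and your even-$n$ one is its $E_{-}^{2}$), with the lower bound obtained by the same quadrangle-incidence count (each spoke on two quadrangles, each cycle edge on one) combined with the parity constraints from the negative outer/inner cycles. Your presentation merely unifies the paper's separate odd/even contradiction arguments into one inequality $2s+a+b\ge n$ with $a\ge 1$ (and $b\ge 1$ for even $n$), and adds a cycle-basis realizability remark that the paper avoids by exhibiting the signatures explicitly.
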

\begin{proof}
Let $n=2k+1$ and consider a signed generalised Petersen graph $[P_{2k+1,1}, E_{-}^{1}]$, where \linebreak[4] $E_{-}^{1} = \{u_{0}v_{0}, u_{2}v_{2}, u_{4}v_{4},..., u_{2k-2}v_{2k-2}, u_{2k-1}u_{2k}\}$. Note that $|E_{-}^{1}| = k+1$. It is clear that the graph $P_{2k+1,1}$ has $2k+1$ quadrangles. With one negative edge, it can be made at most two quadrangles negative and therefore with $k$ or less than $k$ negative edges it can be made at most $2k$ quadrangles negative. But the edges of $E_{-}^{1}$ makes all quadrangles negative. Therefore signature $E_{-}^{1}$ is a minimum signature. Hence $l[P_{2k+1,1}, E_{-}^{1}] = k+1$. For example, see Figure~\ref{P7}.

Let $n=2k$ and consider a signed generalised Petersen graph $[P_{2k,1}, E_{-}^{2}]$, where \linebreak[4]  $E_{-}^{2} = \{u_{0}v_{0}, u_{2}v_{2}, u_{4}v_{4},..., u_{2k-4}v_{2k-4}, u_{2k-3}u_{2k-2}, v_{2k-2}v_{2k-1}\}$. Note that $|E_{-}^{2}| = k+1$, and we want to prove that $E_{-}^{2}$ is minimum. The graph $P_{2k,1}$ has $2k$ quadrangles. Further, $E_{-}^{2}$ makes the outer cycle, the inner cycle and all the quadrangles of $P_{2k,1}$ negative. Let $E_{-}^{3}$ be a signature equivalent to $E_{-}^{2}$. Assume that $E_{-}^{3}$ contains $r_{1}$ edges from the outer cycle, $r_{2}$ edges from the inner cycle and $r_{3}$ spokes, and that $r_{1}+r_{2}+r_{3} \leq k$. Since the outer and the inner cycles are negative, $r_{1}$ and $r_{2}$ must be odd and so $r_{3} \leq k-2$.

Now the $r_{1}$ negative edges of the outer cycle can make $r_{1}$ quadrangles negative. The $r_{2}$ negative edges of the inner cycle can make $r_{2}$ quadrangles negative. Further, the $r_{3}$ negative spokes can make at most $2r_{3}$ quadrangles negative. Thus the number of negative quadrangles in $E_{-}^{3}$ is at most $r_{1}+r_{2}+2r_{3}$, and $r_{1}+r_{2}+2r_{3} = (r_{1}+r_{2}+r_{3})+ r_{3} \leq k+r_{3} \leq k+k-2 = 2k-2$. However, as $E_{-}^{3} \sim E_{-}^{2}$, all the quadrangles must remain negative in $E_{-}^{3}$. Therefore any signature equivalent to $E_{-}^{2}$ must have at least $k+1$ edges. Hence $l[P_{2k,1}, E_{-}^{2}] = k+1$. This completes the proof.
\end{proof} 

From Theorem~\ref{gcd(n,k)=1} and Lemma~\ref{tight1}, we find the exact value of $D(P_{n,1})$.

\begin{theorem}\label{D(P_{n,1})}
Let $n\geq3$. Then $D(P_{n,1})=\left\lfloor \frac{n}{2} \right\rfloor+1.$
\end{theorem}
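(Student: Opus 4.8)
The plan is to obtain the stated equality by sandwiching $D(P_{n,1})$ between matching upper and lower bounds, both of which are already available from earlier in this section, so that the proof reduces to a short bookkeeping argument. First I would observe that $\gcd(n,1) = 1$ for every $n \geq 3$, so Theorem~\ref{gcd(n,k)=1} applies verbatim with $k = 1$ and yields the upper bound $D(P_{n,1}) \leq \left\lfloor \frac{n}{2} \right\rfloor + 1$.

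For the reverse inequality I would invoke Lemma~\ref{tight1}, which exhibits an explicit signature $E_{-}$ satisfying $l[P_{n,1}, E_{-}] = \left\lfloor \frac{n}{2} \right\rfloor + 1$. Since the maximum frustration is defined in \eqref{max ineq} as the maximum of the frustration index taken over all signatures of $P_{n,1}$, the existence of this single signature already forces
\begin{equation*}
D(P_{n,1}) \;\geq\; l[P_{n,1}, E_{-}] \;=\; \left\lfloor \frac{n}{2} \right\rfloor + 1.
\end{equation*}
Combining this with the upper bound from the previous paragraph gives $D(P_{n,1}) = \left\lfloor \frac{n}{2} \right\rfloor + 1$.

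There is essentially no analytic obstacle, since both halves have been proved already; the only point requiring care is checking that the two ingredients genuinely cover all $n \geq 3$. On the upper-bound side this is automatic because $k = 1$ is always a valid instance of Theorem~\ref{gcd(n,k)=1}. On the lower-bound side I would note that Lemma~\ref{tight1} handles the odd case $n = 2k+1$ and the even case $n = 2k$ by separate constructions, so its conclusion indeed holds for every $n \geq 3$, and hence the matching lower bound is available in all cases. This closes the argument.
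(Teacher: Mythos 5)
Your proposal is correct and is precisely the paper's argument: the paper derives this theorem by combining the upper bound of Theorem~\ref{gcd(n,k)=1} (with $k=1$) with the tight signature constructed in Lemma~\ref{tight1}, exactly as you do. Your additional check that Lemma~\ref{tight1} covers both parities of $n$ is sound bookkeeping and matches the lemma's two constructions.
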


In the following lemma, we show that there exists a signature $E_{-}$ such that $l[P_{2m+1,2}, E_{-}] = m+1$, where $m\geq2$. 

\begin{lemm}\label{tight2}
For $m \geq 2$, there exists is a signature $E_{-}$ such that $l[P_{2m+1,2}, E_{-}] = m+1$.
\end{lemm}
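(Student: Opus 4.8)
The plan is to get the lower bound $l[P_{2m+1,2}, E_-] \ge m+1$ for a cleverly chosen signature $E_-$; the matching upper bound comes for free, since $\gcd(2m+1,2)=1$ and Theorem~\ref{gcd(n,k)=1} gives $l[P_{2m+1,2}, E_-'] \le \lfloor \tfrac{2m+1}{2}\rfloor + 1 = m+1$ for \emph{every} signature. The engine of the lower bound, mirroring the quadrangle count of Lemma~\ref{tight1}, is the family of pentagons $P_i = u_i u_{i+1} u_{i+2} v_{i+2} v_i u_i$ for $i \in [2m+1]$; these are genuine $5$-cycles precisely because $m \ge 2$, and there are exactly $2m+1$ of them.

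First I would record the incidence data between edges and pentagons: each outer edge $u_i u_{i+1}$ lies in the two pentagons $P_{i-1}, P_i$; each spoke $u_i v_i$ lies in $P_{i-2}, P_i$; and each inner edge $v_i v_{i+2}$ lies only in $P_i$. Hence every edge of $P_{2m+1,2}$ belongs to \emph{at most two} of these pentagons. Consequently, if a signature makes all $2m+1$ pentagons negative, then each negative pentagon contains at least one negative edge, and counting (negative edge, pentagon) incidences gives $2m+1 \le 2\cdot(\text{number of negative edges})$, so that signature has at least $m+1$ negative edges.

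Next I would exhibit a concrete signature making every pentagon negative, namely
$E_- = \{u_1 u_2, u_3 u_4, \ldots, u_{2m-1} u_{2m}\} \cup \{v_{2m} v_1\}$ (note $v_{2m+2} = v_1$), which has exactly $m+1$ edges. A short parity check shows each $P_i$ contains exactly one negative edge: the $m$ negative outer edges $u_{2j+1} u_{2j+2}$ ($0 \le j \le m-1$) hit each of $P_0, \ldots, P_{2m-1}$ exactly once, while the single inner edge $v_{2m} v_1$ alone covers $P_{2m}$. To turn the incidence bound into a frustration-index statement, I would invoke Theorem~\ref{Signature}: every signature $E_-' \sim E_-$ has the same set of negative cycles, so all $2m+1$ pentagons remain negative in $E_-'$; by the count above $|E_-'| \ge m+1$, and then \eqref{min ineq} yields $l[P_{2m+1,2}, E_-] = \min_{E_-' \in [E_-]} |E_-'| \ge m+1$. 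Together with the upper bound this forces $l[P_{2m+1,2}, E_-] = m+1$.

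The main obstacle is the verification that the displayed $E_-$ really makes all pentagons negative: one must confirm that the outer edges of odd index hit $P_0,\ldots,P_{2m-1}$ exactly once each and miss $P_{2m}$, which is then covered solely by $v_{2m} v_1$. The bookkeeping at the index wrap-around (where $2m+1 \equiv 0$) is where an off-by-one slip is easiest; everything else is routine once the pentagon family and its at-most-two incidence structure are in place.
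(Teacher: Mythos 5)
Your proof is correct and rests on the same engine as the paper's: a double-counting argument over the $2m+1$ pentagons $u_iu_{i+1}u_{i+2}v_{i+2}v_iu_i$, each edge lying in at most two of them, together with an explicit $(m+1)$-edge signature making all of them negative, so that Theorem~\ref{Signature} and \eqref{min ineq} force every equivalent signature to have at least $m+1$ edges. The execution differs in two ways that make your version cleaner. First, the paper counts \emph{all} $5$-cycles of $P_{2m+1,2}$ and asserts there are exactly $2m+1$ of them; this is true for $m\ge 3$ but fails for $m=2$ (the Petersen graph has twelve pentagons, and each edge lies in four of them), which is why the paper must dispose of $m=2$ separately by citing Zaslavsky's computation for signed Petersen graphs. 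Your count is restricted to the fixed family of $2m+1$ pentagons, for which the at-most-two incidence bound holds for every $m\ge 2$, so your argument is uniform and self-contained. Second, the paper uses spoke-heavy signatures and splits into cases according to the parity of $m$, whereas your single signature $\{u_1u_2,u_3u_4,\dots,u_{2m-1}u_{2m}\}\cup\{v_{2m}v_1\}$ works for all $m\ge 2$; your bookkeeping is correct, including at the wrap-around pentagon $P_{2m}$, whose only negative edge is the inner edge $v_{2m}v_1$, since its outer edges $u_{2m}u_0$, $u_0u_1$ and its spokes are all positive. The upper bound you take from Theorem~\ref{gcd(n,k)=1} could even be replaced by the trivial observation $l[P_{2m+1,2},E_-]\le|E_-|=m+1$, but either way the conclusion stands.
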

\begin{proof}
For $m=2$, the graph $P_{5,2}$ is the Petersen graph. In \cite{T.Zaslavsky}, the author proved that there is a signed Petersen graph of frustration index three. Therefore the result is true for $m=2$.

Now let $m\geq3$ be odd. Consider the signed generalised Petersen graph $[P_{2m+1,2}, E_{-}^{1}]$, where \linebreak[4] $E_{-}^{1} = \{u_{0}v_{0}, u_{1}v_{1}, u_{4}v_{4}, u_{5}v_{5},...,u_{2m-6}v_{2m-6},u_{2m-5}v_{2m-5}, u_{2m-2}v_{2m-2}, v_{2m-3}v_{2m-1}\}$. It is clear that $|E_{-}^{1}|= m+1$. Further, $P_{2m+1,2}$ have exactly $2m+1$ cycles of length five. Note that with one negative edge, at most two 5-cycles can be made negative and therefore with $m$ or less than $m$ edges, at most $2m$ 5-cycles can be made negative. But $E_{-}^{1}$ makes all the 5-cycles negative. Thus $E_{-}^{1}$ is minimum. Hence $l[P_{2m+1,2}, E_{-}^{1}] = m+1$.

Let $m \geq 4$ be even. Consider the signed generalised Petersen graph $[P_{2m+1,2}, E_{-}^{2}]$, where \linebreak[4] $E_{-}^{2} = \{u_{0}v_{0}, u_{1}v_{1}, u_{4}v_{4}, u_{5}v_{5},...,u_{2m-4}v_{2m-4},u_{2m-3}v_{2m-3},v_{2m-2}v_{2m}\}$. It is clear that $|E_{-}^{2}|= m+1$, and $E_{-}^{2}$ makes all the 5-cycles negative. By similar argument as above paragraph, it can be shown that $E_{-}^{2}$ is minimum. Hence $l[P_{2m+1,2}, E_{-}^{2}] = m+1$. This completes the proof.
\end{proof}

From Theorem~\ref{gcd(n,k)=1} and Lemma~\ref{tight2}, we find the exact value of $D(P_{2m+1,2})$, as given in Theorem~\ref{D(P_{2m+1,2})}.

\begin{theorem}\label{D(P_{2m+1,2})}
Let $m \geq 2$. Then $D(P_{2m+1,2}) = m+1.$
\end{theorem}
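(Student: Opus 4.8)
The plan is to obtain the claimed equality as an immediate consequence of the two results already established for this family, namely the general upper bound of Theorem~\ref{gcd(n,k)=1} and the explicit lower-bound construction of Lemma~\ref{tight2}. The only preliminary observation I need is that the hypothesis of Theorem~\ref{gcd(n,k)=1} is met: since $n = 2m+1$ is odd, we have $\gcd(2m+1, 2) = 1$, so the theorem applies with $k = 2$.

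First I would derive the upper bound. Specializing Theorem~\ref{gcd(n,k)=1} to $n = 2m+1$ gives
\begin{equation*}
D(P_{2m+1,2}) \leq \left\lfloor \frac{2m+1}{2} \right\rfloor + 1 = m + 1,
\end{equation*}
where the floor evaluates to $m$ because $2m+1$ is odd. This disposes of the $\leq$ direction with no further work.

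Next I would supply the matching lower bound. By Lemma~\ref{tight2}, for every $m \geq 2$ there is a signature $E_{-}$ of $P_{2m+1,2}$ with $l[P_{2m+1,2}, E_{-}] = m+1$. By the definition of maximum frustration in~\eqref{max ineq}, $D(P_{2m+1,2})$ is the maximum of $l[P_{2m+1,2}, E_{-}]$ over all signatures, so the existence of a single signature attaining $m+1$ forces $D(P_{2m+1,2}) \geq m+1$.

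Combining the two inequalities yields $D(P_{2m+1,2}) = m+1$, completing the argument. There is essentially no obstacle here: the substantive content lives in the two cited results (the upper bound of Theorem~\ref{gcd(n,k)=1} and the tightness construction of Lemma~\ref{tight2}), and the present statement merely records their confluence for the $k=2$ case. The only point demanding even minimal care is confirming the coprimality hypothesis and the parity-dependent evaluation $\lfloor (2m+1)/2 \rfloor = m$, both of which are routine.
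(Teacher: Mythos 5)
Your proof is correct and follows exactly the paper's own route: the paper states this theorem as an immediate consequence of the upper bound in Theorem~\ref{gcd(n,k)=1} (applicable since $\gcd(2m+1,2)=1$) together with the tight construction of Lemma~\ref{tight2}, which is precisely your argument. Nothing is missing; the coprimality check and the evaluation $\left\lfloor \frac{2m+1}{2} \right\rfloor = m$ are the only details, and you handled both.
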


In the next lemma, we show that $l[P_{4m-1,3}, E_{-}] = 2m$, where $\gcd(4m-1,3)=1~\text{and}~m\geq2$.

\begin{lemm}\label{tight3}
Let $m \geq 2$ and $\gcd(4m-1,3)=1$. Then there exists is a signature $E_{-}$ such that $l[P_{4m-1,3}, E_{-}] = 2m$.
\end{lemm}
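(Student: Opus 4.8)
The plan is to exhibit one explicit signature $E_{-}$ on $P_{4m-1,3}$ with $|E_{-}| = 2m = \left\lfloor \frac{4m-1}{2}\right\rfloor + 1$ and to prove it is a \emph{minimum} signature; since Theorem~\ref{gcd(n,k)=1} already gives $l[P_{4m-1,3},E_{-}] \le 2m$, this yields $l[P_{4m-1,3},E_{-}] = 2m$. Write $n = 4m-1$, and for $i\in[n]$ let $H_{i}$ be the $6$-cycle $u_{i}u_{i+1}u_{i+2}u_{i+3}v_{i+3}v_{i}u_{i}$; there are $n$ of these hexagons, an outer edge lies in three of them, a spoke in two, and an inner edge in exactly one. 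I would choose $E_{-}$ to consist of the single outer edge $u_{0}u_{1}$ together with a set of $2m-1$ spokes (and no inner edge), selected so that every $H_{i}$ is negative; then the outer cycle $C_{o}$ is automatically negative. The spoke set is in fact forced: reading spokes in the cyclic order $s_{0},s_{3},s_{6},\dots$ dictated by the single inner cycle, negativity of the $H_{i}$ becomes a two-term recurrence that alternates the spoke signs except at the three indices disturbed by $u_{0}u_{1}$; solving it produces a set of $2m-1$ negative spokes, so $|E_{-}| = 2m$.

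For minimality, let $E_{-}'$ be any equivalent signature; by Lemma~\ref{minimal} I may assume its edges form a matching, and I write $r_{1},r_{2},r_{3}$ for the numbers of negative outer edges, inner edges, and spokes. I would first extract two switching invariants. Since $C_{o}$ is negative, $r_{1}$ is odd. Since the product of the signs of all $n$ hexagons equals $(-1)^{3r_{1}+2r_{3}+r_{2}} = (-1)^{r_{1}+r_{2}}$ and also equals $(-1)^{n}=-1$ (all $H_{i}$ negative and $n$ odd), $r_{1}+r_{2}$ is odd; together with $r_{1}$ odd this forces $r_{2}$ to be even.

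Next I would reduce minimality to a count of negative spokes. Setting $f(j)=s_{c_{j}}$ with $c_{j}=3j \bmod n$, negativity of $H_{c_{j}}$ reads $f(j)+f(j+1) = 1+\varepsilon_{j} \pmod 2$, where $\varepsilon_{j}$ collects the negative outer and inner edges incident to $H_{c_{j}}$. Thus the spoke sequence alternates around the odd cycle $\Zl_{n}$ except at the positions where $\varepsilon_{j}=1$; a run-counting argument then bounds $r_{3}$ from below by roughly half the number of non-alternating steps, and the aim is to combine this with the parities above to conclude $r_{1}+r_{2}+r_{3}\ge 2m$.

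The hard part will be exactly this last step. In the $P_{n,1}$ and $P_{n,2}$ cases every edge meets at most two girth cycles, so the clean ``one edge kills at most two short cycles'' estimate suffices; here an outer edge meets three hexagons, and naive face counting gives only $r_{1}+r_{2}+r_{3}\gtrsim n/3$ — one can in fact make all hexagons negative with about $n/3$ edges. The point that must be exploited is that such cheap representatives carry the wrong outer-cycle parity: once $C_{o}$ is pinned negative, $\gcd(n,3)=1$ (i.e. $3\nmid n$) prevents covering the outer cycle by disjoint triples of edges, so negative outer edges cannot cancel the forced spoke alternations without paying for themselves. Making this trade-off quantitative — showing that for every admissible (odd) $r_{1}$ and (even) $r_{2}$ the forced number of negative spokes keeps $r_{1}+r_{2}+r_{3}\ge 2m$ — is the crux, and I expect it to hinge on the arithmetic of the ordering $c_{j}=3j \bmod n$, organized by a case split on the residue of $m$ modulo $3$ (equivalently of $n$ modulo $3$) or on the parity of $m$, mirroring the splits already present in Lemmas~\ref{tight1} and~\ref{tight2}.
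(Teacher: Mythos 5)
Your proposal is a plan, not a proof: the minimality half of the lemma is never established. What you actually prove is (i) the two parity invariants ($r_{1}$ odd, $r_{2}$ even) and (ii) the run-counting bound $r_{3}\ge (n-D)/2$ with $D\le 3r_{1}+r_{2}$ non-alternating steps, and, as you yourself concede, these only give $r_{1}+r_{2}+r_{3}\gtrsim n/3$. The step that would finish the lemma --- showing that for every admissible pair $(r_{1},r_{2})$ one still has $r_{1}+r_{2}+r_{3}\ge 2m$ --- is exactly the step you defer (``I expect it to hinge on \dots''). Concretely, already in the simplest case $r_{1}=1$, $r_{2}=0$ your stated bounds give only $r_{3}\ge (n-3)/2 = 2m-2$, hence a total of $2m-1$, one short of the target; closing that gap requires analyzing where the three disturbances created by the single negative outer edge land in the cyclic order $3j \bmod n$ and showing that some block of negative spokes is forced to have length at least two. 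Nothing in the proposal does this. A second, related gap: your claim that solving the recurrence ``produces a set of $2m-1$ negative spokes'' is asserted, not proved. The recurrence has two complementary solutions whose sizes sum to $4m-1$, so the smaller one has $2m-2$ or $2m-1$ spokes; deciding which requires the same positional analysis, and if it were $2m-2$ the class would contain a signature of size $2m-1$ and the claimed value $l=2m$ would be false. So the construction and the minimality are both left hanging on the same unproved arithmetic fact.

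For comparison, the paper exhibits essentially the same signature ($2m-1$ spokes plus one outer edge, making every hexagon negative) but proves minimality by precisely the naive count you reject: it asserts that $2m-1$ or fewer negative edges can make at most $4m-2$ hexagons negative. Your objection to this kind of counting is correct: an outer edge lies in three hexagons, and indeed in $P_{7,3}$ the three edges $u_{0}u_{1}$, $u_{3}u_{4}$, $v_{4}v_{0}$ make all seven hexagons negative. (This does not disprove the lemma, because that cheap signature leaves the outer cycle positive and so is not equivalent to $E_{-}$; but it does show the paper's counting step, which ignores the outer-cycle sign, cannot by itself establish minimality.) So your diagnosis of where the real difficulty lies is sharper than the paper's own argument --- the trade-off between negative outer edges and forced spoke alternation, controlled by $\gcd(n,3)=1$, genuinely is the crux --- but since neither the paper's count nor your unfinished run-length analysis supplies the required lower bound, the proposal as written does not prove the statement.
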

\begin{proof}
Consider the signed generalized Petersen graph $[P_{4m-1,3}, E_{-}]$, where the signature $E_{-}$ is \linebreak[4] $E_{-}= \{u_{0}v_{0},u_{2}v_{2},u_{4}v_{4},...,u_{4m-6}v_{4m-6},u_{4m-4}v_{4m-4},u_{4m-3}u_{4m-2}\}$. Clearly $|E_{-}| = 2m$. Note that $P_{4m-1,3}$ has exactly $4m-1$ cycles of length six. Further, with $2m-1$ or less than $2m-1$ negative edges at most $4m-2$ cycles of length six can be made negative. But $E_{-}$ makes all the 6-cycles negative. Therefore signature $E_{-}$ is minimum. This implies that $l[P_{4m-1,3}, E_{-}] = 2m$ and the proof is complete.
\end{proof}

In the following theorem, that follows from Theorem~\ref{gcd(n,k)=1} and Lemma~\ref{tight3}, we see that the exact value of $D(P_{4m-1,3})$ is $2m$, where $\gcd(4m-1,3)=1$. 

\begin{theorem}\label{D(P_{4m-1,3})}
Let $m\geq2$ be such that $\gcd(4m-1,3)=1$. Then $D(P_{4m-1,3}) = 2m.$
\end{theorem}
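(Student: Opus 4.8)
The plan is to establish the equality $D(P_{4m-1,3}) = 2m$ by sandwiching this quantity between matching upper and lower bounds, both of which are essentially already supplied by the results proved earlier in this section. The theorem is therefore a direct corollary: one specializes the general upper bound to the case $n = 4m-1$, $k=3$, and pairs it with the explicit tightness construction.

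First I would obtain the upper bound. Since $\gcd(4m-1,3)=1$ by hypothesis, Theorem~\ref{gcd(n,k)=1} applies with $n=4m-1$ and $k=3$, giving
$$D(P_{4m-1,3}) \leq \left\lfloor \frac{4m-1}{2} \right\rfloor + 1.$$
The only computation required is to simplify the floor. Because $4m$ is even, $4m-1$ is odd, so $\left\lfloor \frac{4m-1}{2} \right\rfloor = 2m-1$, and the right-hand side collapses to $2m$. Hence $D(P_{4m-1,3}) \leq 2m$.

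Next I would supply the matching lower bound. Lemma~\ref{tight3} exhibits, under exactly the same hypotheses $m\geq 2$ and $\gcd(4m-1,3)=1$, an explicit signature $E_{-}$ with $l[P_{4m-1,3}, E_{-}] = 2m$. Since $D(P_{4m-1,3})$ is by definition (equation~\eqref{max ineq}) the maximum of $l[P_{4m-1,3}, E_{-}']$ taken over all signatures $E_{-}'$, this single witness immediately forces $D(P_{4m-1,3}) \geq 2m$. Combining the two inequalities yields the claimed equality $D(P_{4m-1,3}) = 2m$.

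There is no genuine obstacle in this argument, as all the substantive work was already carried out in proving Theorem~\ref{gcd(n,k)=1} (the general upper bound via the structure of minimum signatures on the auxiliary graph $G_n$) and Lemma~\ref{tight3} (the construction making all $4m-1$ six-cycles negative while using only $2m$ edges). The sole point requiring any care is the parity observation that makes the floor equal $2m-1$ rather than something larger; this is what allows the upper bound to meet the lower bound exactly at $2m$.
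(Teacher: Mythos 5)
Your proposal is correct and is exactly the paper's argument: the paper states that the theorem follows from Theorem~\ref{gcd(n,k)=1} (which for odd $n=4m-1$ gives the upper bound $\left\lfloor \frac{4m-1}{2} \right\rfloor + 1 = 2m$) combined with Lemma~\ref{tight3} (which supplies the witness signature forcing $D(P_{4m-1,3}) \geq 2m$). You have merely written out the parity computation and the appeal to the definition~\eqref{max ineq} that the paper leaves implicit.
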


\section{Upper Bound of $D(P_{n,k})$ for $\gcd(n,k) \geq 2$}\label{Bound_gcd(n,k) = d}

In this section, we get an upper bound for the maximum frustration index of $P_{n,k}$, where $\gcd(n,k) = d \geq 2$. Since $\gcd(n,k) = d$, so $P_{n,k}$ has $d$ pairwise disjoint inner cycles of length $\frac{n}{d}$. Let us denote these $d$ cycles by $C_{I_1}, C_{I_2},..., C_{I_d}$, where $C_{I_i} = v_{i-1}v_{i+k-1}v_{i+2k-1}...v_{n-k+i-1}v_{i-1}$ for $1 \leq i \leq d$.

For $1 \leq i \leq d$, let $G_{I_i}$ be a subgraph of $P_{n,k}$ which is defined as follows. The vertex set of $G_{I_i}$ is given by $V(G_{I_i}) = \{v_{i-1},v_{i+k-1},v_{i+2k-1},...,v_{n-k+i-1}, u_{i-1},u_{i+k-1},u_{i+2k-1},...,u_{n-k+i-1} \}$ and the edge set of $G_{I_i}$ is given by the union of $E(C_{I_i})$ along with the set of all spokes incident to the vertices of $C_{I_i}$. We say $[G_{I_i}, E_{-}]$ is balanced or unbalanced according as $C_{I_i}$ is positive or negative, respectively. 

It is important to note that the graph $G_{I_i}$ is isomorphic to $G_{r}$ with $r = \frac{n}{d}$, where $G_{r}$ is given in Definition~\ref{$G_n$}. Let $[G_{I_i}, E_{-}]$ be a signed graph and we do not switch $u\text{-vertices}$ of $G_{I_i}$. Then by Lemma~\ref{lemma1o4}, the maximum number of negative edges in a minimum signature is either $\left\lfloor \frac{n}{2d} \right\rfloor +1$ or $\left\lfloor \frac{n}{2d} \right\rfloor$ according as $[G_{I_i}, E_{-}]$ is unbalanced or balanced, respectively. Now we proceed to determine an upper bound for the maximum frustration of $P_{n,k}$ in terms of $n$ and $d$, where $\gcd(n,k) = d\geq2$.

\begin{theorem}\label{boundgcd(n,k)=d}
Let $\gcd(n,k) = d\geq 2$. Then $D(P_{n,k}) \leq d\left\lfloor \frac{n}{2d} \right\rfloor +d +1.$
\end{theorem}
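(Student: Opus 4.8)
The plan is to reduce the problem to $d$ independent instances of Lemma~\ref{lemma1o4}, one for each inner cycle, plus a separate and cheap treatment of the outer cycle. The starting observation is a clean decomposition of $P_{n,k}$: since $\gcd(n,k)=d$, the indices split by residue modulo $d$, so the $v$-vertex $v_j$ lies on $C_{I_i}$ exactly when $j\equiv i-1\pmod d$, and the spoke $u_jv_j$ (hence $u_j$) belongs to $G_{I_i}$ for the same $i$. Consequently the $d$ subgraphs $G_{I_1},\dots,G_{I_d}$ partition both the $u$-vertices and the $v$-vertices, they are pairwise edge-disjoint, and together with $E(C_o)$ they exhaust $E(P_{n,k})$. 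The key structural fact I would record first is that switching a $v$-vertex of $G_{I_i}$ flips only edges of $G_{I_i}$ (two inner-cycle edges and one spoke) and never touches $C_o$ or any other $G_{I_j}$, whereas switching $u$-vertices affects only $C_o$ and the spokes.

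Since the frustration index is the minimum size over all switching-equivalent signatures, it suffices, for an arbitrary signature $E_-$, to exhibit a single equivalent signature of size at most $d\lfloor \frac{n}{2d}\rfloor + d + 1$. First I would use $u$-vertex switchings alone to normalise the outer cycle: if $C_o$ is positive, make every edge of $C_o$ positive; if $C_o$ is negative, make exactly one pre-chosen edge of $C_o$ negative and the rest positive. This contributes at most one negative edge on $C_o$ and, crucially, leaves the signs of all inner cycles untouched. Next, holding the $u$-vertices fixed, I would process each $G_{I_i}$ separately using only $v$-vertex switchings. Because $G_{I_i}\cong G_{n/d}$ with its cycle $C_{I_i}$ playing the role of the spine of $G_{n/d}$ and the incident $u$-vertices playing the role of the pendant vertices, the hypothesis ``no switching by the pendant vertices'' of Lemma~\ref{lemma1o4} translates into ``no switching of $u$-vertices'', which is precisely the regime we are in. Hence Lemma~\ref{lemma1o4} bounds the number of negative edges retained in $G_{I_i}$ by $\lfloor \frac{n}{2d}\rfloor + 1$ when $C_{I_i}$ is negative and by $\lfloor \frac{n}{2d}\rfloor$ when $C_{I_i}$ is positive.

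By the partition noted above, these $d$ reductions interfere neither with one another nor with the already-normalised $C_o$, so the contributions simply add. Writing $s\in\{0,1\}$ for the indicator that $C_o$ is negative and $t$ for the number of negative inner cycles, the resulting equivalent signature has size at most $s + t(\lfloor \frac{n}{2d}\rfloor + 1) + (d-t)\lfloor \frac{n}{2d}\rfloor = s + d\lfloor \frac{n}{2d}\rfloor + t \le d\lfloor \frac{n}{2d}\rfloor + d + 1$, which is the desired bound; as $E_-$ was arbitrary, this bounds $D(P_{n,k})$.

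The step I expect to be the main obstacle is justifying the non-interference rigorously, i.e.\ confirming that the $v$-switchings used to minimise one $G_{I_i}$ cannot undo the normalisation of $C_o$ or spoil another $G_{I_j}$. This hinges entirely on the vertex/edge partition and on the invariance of $C_o$ under $v$-switchings; once that is pinned down, together with the careful role-swap in applying Lemma~\ref{lemma1o4} (spine $\leftrightarrow$ inner cycle, pendants $\leftrightarrow$ $u$-vertices), the final summation is routine.
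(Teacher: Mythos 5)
Your proposal is correct and follows essentially the same route as the paper: decompose $P_{n,k}$ into the edge-disjoint subgraphs $G_{I_1},\dots,G_{I_d}$ plus $E(C_o)$, normalise $C_o$ by $u$-switchings, apply Lemma~\ref{lemma1o4} to each $G_{I_i}$ (with $C_{I_i}$ as spine and $u$-vertices as pendants, so that forbidding $u$-switchings is exactly the lemma's hypothesis), and sum the contributions. The only difference is cosmetic: the paper splits into three cases (handling the all-inner-cycles-positive case via $G_o$ instead), whereas you treat all cases uniformly with the indicators $s$ and $t$, which is a slight streamlining rather than a different argument.
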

\begin{proof}
Let $[P_{n,k}, E_{-}]$ be a signed generalised Petersen graph. Let $E_{-}'$ be a minimum signature, equivalent to $E_{-}$. We consider the following cases to show that $|E_{-}'| \leq d\left\lfloor \frac{n}{2d} \right\rfloor +d +1$.

\vspace*{0.3cm} 
\noindent
\textbf{Case 1.} For each $1 \leq i \leq d$, let $C_{I_i}$ be positive in $[P_{n,k}, E_{-}]$. Without loss of generality, we assume all the edges of $C_{I_i}$ to be positive for $1 \leq i \leq d$. It is clear that all the edges of $E_{-}$ lie inside the subgraph $G_{o}$ of $P_{n,k}$. To reduce the signature $E_{-}$ further, we do not need to switch any $v\text{-vertices}$ and therefore from Lemma~\ref{lemma1o4}, the maximum number of negative edges in a minimum signature $E_{-}'$, switching equivalent to $E_{-}$, is $\left\lfloor \frac{n}{2} \right\rfloor +1$. Thus $|E_{-}'| \leq \left\lfloor \frac{n}{2} \right\rfloor +1 < d\left\lfloor \frac{n}{2d} \right\rfloor +d +1$. 

\vspace*{0.3cm} 
\noindent
\textbf{Case 2.} Let the cycle $C_{o}$ be positive and let $r$ number of $C_{I_i}\text{'s}$ be negative in $[P_{n,k}, E_{-}]$, where $1 \leq r \leq d$. Without loss of generality, we assume all the edges of $C_{o}$ to be positive. So all the edges of $E_{-}$ lie in the union of $G_{I_i}\text{'s}$. To reduce $E_{-}$, we do not need to switch any $u\text{-vertices}$, so by Lemma~\ref{lemma1o4}, $r$ unbalanced $G_{I_i}\text{'s}$ can add at most $r(\left\lfloor \frac{n}{2d} \right\rfloor +1)$ edges in $E_{-}'$ and $d-r$ balanced $G_{I_i}\text{'s}$ can add at most $(d-r)\left\lfloor \frac{n}{2d} \right\rfloor$ edges in $E_{-}'$. Thus we have $|E_{-}'| \leq r(\left\lfloor \frac{n}{2d} \right\rfloor +1) + (d-r)\left\lfloor \frac{n}{2d} \right\rfloor \leq d\left\lfloor \frac{n}{2d} \right\rfloor +r < d\left\lfloor \frac{n}{2d} \right\rfloor +d +1$, as $r \leq d$. Hence $|E_{-}'| < d\left\lfloor \frac{n}{2d} \right\rfloor +d +1$.

\vspace*{0.3cm} 
\noindent
\textbf{Case 3.} Let the cycle $C_{o}$ be negative and let $r$ number of $C_{I_i}\text{'s}$ be negative in $[P_{n,k}, E_{-}]$, where $1 \leq r \leq d$. Without loss of generality, we assume that exactly one edge of $C_{o}$ is negative in $E_{-}$, and let that negative edge be $u_{0}u_{1}$. Thus all the edges of $E_{-}$, except $u_{0}u_{1}$, lie in the union of $G_{I_i}\text{'s}$. So by Lemma~\ref{lemma1o4}, it is easy to see that $|E_{-}'| \leq r(\left\lfloor \frac{n}{2d} \right\rfloor +1) + (d-r)\left\lfloor \frac{n}{2d} \right\rfloor +1 \leq d\left\lfloor \frac{n}{2d} \right\rfloor +d +1$, as $r \leq d$.

From these three cases, we conclude that $D(P_{n,k}) \leq d\left\lfloor \frac{n}{2d} \right\rfloor +d +1.$ This completes the proof.
\end{proof}

We note that if the inner cycles of a generalised Petersen graph are triangles then the bound obtained in Theorem~\ref{boundgcd(n,k)=d} can be improved. To see this fact, let $P_{3k,k}$ be a generalised Petersen graph, where $k \geq 2$. It is obvious that the inner cycles of $P_{3k,k}$ are triangles and there are $k$ such pairwise disjoint triangles induced by $v\text{-vertices}$. The subgraph $G_{I_1}$ of $P_{3k,k}$ is shown in Figure~\ref{Figure3}.

\begin{figure}[h]
\centering
\begin{tikzpicture}[scale=0.6]
\node[vertex] (v1) at (0,0) {};
\node at (-0.3,0.4) {$v_{2k}$};
\node[vertex] (v2) at (3,0) {};
\node at (3.25,0.25) {$v_{k}$};
\node[vertex] (v3) at (1.5,2.5) {};
\node at (2,2.5) {$v_{0}$};
\node[vertex] (v4) at (1.5,4) {};
\node at (2,4) {$u_{0}$};
\node[vertex] (v5) at (4.6,-1) {};
\node at (4.8,-0.65) {$u_{k}$};
\node[vertex] (v6) at (-1.6,-1) {};
\node at (-1.8,-0.6) {$u_{2k}$};

\foreach \from/\to in {v1/v3,v1/v2,v2/v3,v3/v4,v2/v5,v1/v6} \draw (\from) -- (\to);

\end{tikzpicture}
\caption{The graph $G_{I_1}$} \label{Figure3}
\end{figure}
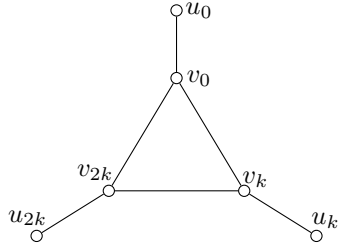

For $1 \leq i \leq k$, let $[G_{I_i}, E_{-}]$ be a signed graph and assume that switching by $u\text{-vertices}$ is not allowed. Then by Lemma~\ref{lemma1o4}, the maximum number of negative edges in a minimum signature, switching equivalent to $E_{-}$, is either two or one according as $[G_{I_i}, E_{-}]$ is unbalanced or balanced, respectively. Further, if $[G_{I_i}, E_{-}]$ is unbalanced  and a minimum equivalent signature $E_{-}'$ has two edges then the edges of $E_{-}'$ can be chosen to be $v_{i-1}v_{i+k-1}$ and $u_{i+2k-1}v_{i+2k-1}$. We call the edge $v_{i-1}v_{i+k-1}$ a \emph{primary edge} and the edge $u_{i+2k-1}v_{i+2k-1}$ a \emph{secondary edge} of $G_{I_i}$. Therefore the secondary edge of any minimum signature of unbalanced $[G_{I_i}, E_{-}]$ must belongs to the set $S$, where $S=\{u_{2k}v_{2k}, u_{2k+1}v_{2k+1},...u_{3k-1}v_{3k-1}\}$.

Let $[P_{3k,k}, E_{-}]$ be a signed graph and $E_{-}'$ be a minimum signature equivalent to $E_{-}$. We have the following observations.

\vspace*{0.2cm}
\noindent
\textbf{Observation 1.} For some $1 \leq i \leq k$, if $C_{I_i}$ is positive in $[P_{3k,k}, E_{-}]$ then the corresponding $G_{I_i}$ contributes at most one edge in $E_{-}'$. This holds true because of Lemma~\ref{lemma1o4}.

\vspace*{0.2cm}
\noindent
\textbf{Observation 2.} For $1 \leq r \leq k$, let $r$ number of $G_{I_i}\text{'s}$ be unbalanced in $[P_{3k,k}, E_{-}]$, each of which contributes primary as well as secondary edge in $ E_{-}'$. Recall that the secondary edge of an unbalanced $G_{I_i}$ must belongs to the set $S$, where $S=\{u_{2k}v_{2k}, u_{2k+1}v_{2k+1},...,u_{3k-1}v_{3k-1}\}$. If $r \leq \left\lfloor \frac{k}{2} \right\rfloor +1$ then such $G_{I_i}\text{'s}$ contribute at most $2r$ negative edges to $E_{-}'$. If $r \geq \left\lfloor \frac{k}{2} \right\rfloor +2$ then by switching each vertex of the set $\{u_{2k},u_{2k+1},...,u_{3k-1}\}$ we will have at most $\left\lfloor \frac{k}{2} \right\rfloor +1$ negative edges instead of having $r$ secondary edges. This ultimately implies that $r$ unbalanced $G_{I_i}\text{'s}$ can contribute at most $r+\left\lfloor \frac{k}{2} \right\rfloor +1$ negative edges to a minimum signature.

\begin{theorem}\label{boundP_{3k,k}}
For $k \geq 2$, $D(P_{3k,k}) \leq \left\lfloor \frac{3k}{2} \right\rfloor +2$.
\end{theorem}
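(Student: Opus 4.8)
The plan is to exploit the partition of $P_{3k,k}$ induced by the triangle subgraphs $G_{I_1},\dots,G_{I_k}$. Their vertex sets partition $V(P_{3k,k})$, and their edge sets partition $E(P_{3k,k})\setminus E(C_o)$, where $C_o=u_0u_1\cdots u_{3k-1}u_0$ is the outer cycle. The useful feature is that switching a $v$-vertex only touches edges inside the single $G_{I_i}$ containing it and never touches $C_o$, while switching a $u$-vertex only touches its spoke and the two incident outer edges. Fix a minimum signature $E_-'$ equivalent to $E_-$, and let $r$ be the number of unbalanced triangles $C_{I_i}$. I would first use $u$-switches to normalise $C_o$ (all edges positive when $C_o$ is positive; exactly one negative edge otherwise), and then use $v$-switches to bring each $G_{I_i}$ into the canonical form of Lemma~\ref{lemma1o4}: by Observation~1 a balanced $G_{I_i}$ keeps at most one negative spoke, while an unbalanced $G_{I_i}$ keeps exactly its primary edge together with its secondary spoke $u_{i+2k-1}v_{i+2k-1}$, which lies in $S=\{u_{2k}v_{2k},\dots,u_{3k-1}v_{3k-1}\}$.

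Next I would count. The $k-r$ balanced triangles contribute at most $k-r$ spokes, and the $r$ unbalanced triangles contribute at most $r$ primary edges together with their secondary spokes, every one of which lies in $S$. The crucial point is that $S$ is exactly the set of spokes meeting the contiguous $u$-arc $\{u_{2k},\dots,u_{3k-1}\}$, so switching that whole arc flips every spoke of $S$ and exactly the two boundary outer edges $u_{2k-1}u_{2k}$ and $u_{3k-1}u_0$, while leaving all triangle edges and all spokes outside $S$ untouched. Writing $N_S$ for the number of negative spokes in $S$ after the canonical reduction, I may therefore choose whether or not to switch this arc and so reduce the spokes of $S$ to $\min(N_S,\,k-N_S)\le\lfloor k/2\rfloor$; this is precisely Observation~2. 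Since the spokes lying outside $S$ come only from the balanced triangles, they number at most $k-r$.

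Assembling the pieces, the negative edges of the resulting signature are at most $r$ primary edges, at most $k-r$ spokes outside $S$, at most $\lfloor k/2\rfloor$ spokes inside $S$, and the remaining outer edges, so that
\begin{equation*}
|E_-'|\;\le\; r+(k-r)+\left\lfloor\frac{k}{2}\right\rfloor+(\text{outer edges})\;=\;\left\lfloor\frac{3k}{2}\right\rfloor+(\text{outer edges}).
\end{equation*}
It then suffices to show the outer edges cost at most $2$. If $C_o$ is positive there are no outer edges before the arc switch and at most two after it; if $C_o$ is negative I would place its unique negative edge at $u_{2k-1}u_{2k}$, one end of the arc, so that switching the arc merely slides this edge to $u_{3k-1}u_0$ and the outer count stays at one. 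Either way the outer contribution is at most $2$, and the bound $D(P_{3k,k})\le\lfloor 3k/2\rfloor+2$ follows because $E_-$ was arbitrary.

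The step I expect to be the main obstacle is exactly this final bookkeeping. The arc switch that trims the secondary spokes is a $u$-switch, so it interacts both with the outer cycle, through its two boundary edges, and with any balanced-triangle spoke that happens to sit inside $S$. Keeping these side effects under control — by the boundary placement of the forced outer edge and by taking whichever of switching or not switching the arc yields the smaller $\min(N_S,\,k-N_S)$ — is what pins the total at $\lfloor 3k/2\rfloor+2$ rather than the weaker $2k+1$ coming from Theorem~\ref{boundgcd(n,k)=d}, and it is the only place where the triangular inner cycles are genuinely exploited.
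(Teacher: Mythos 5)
Your proposal is correct and takes essentially the same route as the paper: the same decomposition into the triangle subgraphs $G_{I_i}$, the same canonical forms from Lemma~\ref{lemma1o4} (the paper's Observations 1 and 2 about primary and secondary edges), and the same switch of the arc $\{u_{2k},\dots,u_{3k-1}\}$ to cap the spokes of $S$. Your bookkeeping — taking $\min(N_S,\,k-N_S)$ over all negative spokes in $S$ and placing the forced negative outer edge at the boundary edge $u_{2k-1}u_{2k}$ — is in fact a slightly cleaner and more careful version of the paper's three-case analysis, but the underlying argument is identical.
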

\begin{proof}
Let $[P_{3k,k}, \Sigma]$ be a signed graph and $E_{-}'$ be a minimum signature, equivalent to $E_{-}$. To prove the theorem it is enough to show that $|E_{-}'| \leq \left\lfloor \frac{3k}{2} \right\rfloor +2$. We consider the following cases.

\vspace*{0.3cm}
\noindent
\textbf{Case 1.} For each $1 \leq i \leq k$, let $C_{I_i}$ be positive in $[P_{3k,k}, E_{-}]$. Without loss of generality, assume that all the edges of inner triangles are positive. Therefore all the edges of $E_{-}$ lie inside the graph $G_{o}$. By Lemma~\ref{lemma1o4}, the maximum number of negative edges in a minimum signature $E_{-}'$ is $\left\lfloor \frac{3k}{2} \right\rfloor +1$. Hence $|E_{-}'| \leq \left\lfloor \frac{3k}{2} \right\rfloor +1$.

\vspace*{0.3cm}
\noindent
\textbf{Case 2.} Let $C_{o}$ be positive and let $r$ number of $C_{i}\text{'s}$ be negative in $[P_{3k,k}, E_{-}]$, where $1 \leq r \leq k$. Without loss of generality, assume that all the edges of $C_{o}$ are positive. If some vertex $v_{i}$ is incident to more than one negative edge, then we switch that vertex $v_{i}$ and we keep doing this operation until we get a minimum signature $E_{-}'$. 

Since switching does not change the signs of cycles, we have $r$ unbalanced $G_{I_i}\text{'s}$ and $k-r$ balanced $G_{I_i}\text{'s}$ in $[P_{3k,k}, E_{-}']$. Therefore the maximum number of negative edges in $[P_{3k,k}, E_{-}']$ is $(r+ \left\lfloor \frac{k}{2} \right\rfloor +1) + (k-r)$, where first summand occurs due to \textbf{observation 2} and second summand occurs due to \textbf{observation 1}. This implies that $|E_{-}'| \leq k+  \left\lfloor \frac{k}{2} \right\rfloor +1 = \left\lfloor \frac{3k}{2} \right\rfloor +1$.

\vspace*{0.3cm}
\noindent
\textbf{Case 3.} Let $C_{o}$ be negative and let $r$ number of $C_{i}\text{'s}$ be negative in $[P_{3k,k}, E_{-}]$, where $1 \leq r \leq k$. Without loss of generality, assume that exactly one of the edges of $C_{o}$ is negative. By similar argument as in Case 2, it can be easily shown that $|E_{-}'| \leq \left\lfloor \frac{3k}{2} \right\rfloor +2$.

In all possible cases, we observe that the maximum number of edges in a minimum signature, equivalent to $E_{-}$, is $\left\lfloor \frac{3k}{2} \right\rfloor +2$. Since $E_{-}$ is arbitrary, we have $D(P_{3k,k}) \leq \left\lfloor \frac{3k}{2} \right\rfloor +2.$ This completes the proof.
\end{proof}

\section{Conclusion}

Among the few questions raised by this research, the followings are of particular interest to the authors.

1. For $k=1,2,3$, we obtained the exact value of $D(P_{n,k})$ where $\gcd(n,k)=1$. What is the exact value of $D(P_{n,k})$ in other cases where $\gcd(n,k)=1$?

2. In Theorem~\ref{boundgcd(n,k)=d}, we obtained $D(P_{n,k}) \leq d\left\lfloor \frac{n}{2d} \right\rfloor +d +1$. One can try to improve this bound to $\left\lfloor \frac{n}{2} \right\rfloor +1$.

From Theorem~\ref{bound0}, we have $l[G, E_{-}] \leq \frac{n}{2}$, where $[G, E_{-}]$ is any signed cubic graph. Can this bound be improved? In~\cite{Sivaraman}, the author give a family of signed cubic graphs to show that without any additional restrictions on the underlying graph, the bound cannot be improved. For example, consider the disjoint union of $k$ copies of $K_4$ with two disjoint negative edges in each $K_4$. But if we restrict to connected graphs then the following natural questions can be asked: 

3. Which cubic connected signed graphs attain the bound $\frac{n}{2}$?

4. Which cubic signed bipartite graphs attain the bound $\frac{n}{2}$?

5. Which cubic planar connected signed graphs attain the bound $\frac{n}{2}$?

\vspace*{0.3cm}
\noindent
\textbf{Acknowledgment.} We wholeheartedly thank Professor Thomas Zaslavsky for suggesting possible research directions and several comments to improve the presentation of this manuscript.

\end{document}